\newtheorem{thm}{Theorem}[section]
\theoremstyle{definition}
\newtheorem{cor}[thm]{Corollary}
\newtheorem{lem}[thm]{Lemma}
\newtheorem{prop}[thm]{Proposition}
\newtheorem{defn}[thm]{Definition}
\newtheorem{algo}{Algorithm}
\newtheorem*{thmA}{Theorem A}
\newtheorem*{thmB}{Theorem B}
\numberwithin{equation}{section}
\newcommand{\N}{\mathbb{N}}
\newcommand{\Z}{\mathbb{Z}}
\newcommand{\Cal}{\mathcal}
\def \<{\langle}
\def \>{\rangle}
\def \((  {(\!(}
\def \)) {)\!)}
\begin{document}

\title[]
{Ostrowski numeration systems, addition and finite automata}

\begin{abstract}
We present an elementary three pass algorithm for computing addition in Ostrowski numerations systems. When $a$ is quadratic,
addition in the Ostrowski numeration system based on $a$ is recognizable by a finite automaton. We deduce that a subset of $X\subseteq \N^n$ is definable in $(\N,+,V_a)$, where $V_a$ is the function that maps a natural number $x$ to the smallest denominator of a convergent of $a$ that appears in the Ostrowski representation based on $a$ of $x$ with a non-zero coefficient, if and only if the set of Ostrowski representations of elements of $X$ is recognizable by a finite automaton. The decidability of the theory of $(\N,+,V_a)$ follows.
\end{abstract}

\author[P. Hieronymi]{Philipp Hieronymi}
\address
{Department of Mathematics\\University of Illinois at Urbana-Champaign\\1409 West Green Street\\Urbana, IL 61801}
\email{phierony@illinois.edu}
\urladdr{http://www.math.uiuc.edu/\textasciitilde phierony}

\author[A. Terry]{Alonza Terry Jr.}
\address
{Department of Mathematics\\University of Illinois at Urbana-Champaign\\1409 West Green Street\\Urbana, IL 61801}
\email{aterry@illinois.edu}

\thanks{The first author was partially supported by NSF grant DMS-1300402 and by UIUC Campus Research Board award 13086. A version of this paper will appear in the \emph{Notre Dame Journal of Formal Logic.}}
\date{\today}

\maketitle

\section{Introduction}

A \textbf{continued fraction expansion} $[a_0;a_1,\dots,a_k,\dots]$ is an expression of the form
\[
a_0 + \frac{1}{a_1 + \frac{1}{a_2+ \frac{1}{a_3+  \frac{1}{\ddots}}}}
\]
For a real number $a$, we say $[a_0;a_1,\dots,a_k,\dots]$ is the continued fraction expansion of $a$ if $a=[a_0;a_1,\dots,a_k,\cdots]$ and $a_0\in \Z$, $a_i\in \N_{>0}$ for $i>0$.
Let $a$ be a real number with continued fraction expansion $[a_0;a_1,\dots,a_k,\dots]$. In this note we study a numeration system due to Ostrowski \cite{Ost} based on the continued fraction expansion of $a$. Set $q_{-1} := 0$ and $q_{0} := 1$, and for $k\geq 0$,
\begin{equation}\label{equation:intro1}
q_{k+1} := a_{k+1} \cdot q_k + q_{k-1}.
\end{equation}
Then every natural number $N$ can be written uniquely as
\[
N = \sum_{k=0}^{n} b_{k+1} q_{k},
\]
where $b_{k} \in \N$ such that $b_1<a_1$, $b_k \leq a_{k}$ and, if $b_k = a_{k}$, $b_{k-1} = 0$. We say the word $b_n\dots b_1$ is the \textbf{Ostrowski representation} of $N$ based on $a$, and we write $\rho_a(N)$ for this word. For more details on Ostrowski representations, see for example Allouche and Shallit \cite[p.106]{Automatic} or Rockett and  Sz\"usz \cite[Chapter II.4]{RS}. When $a$ is the golden ratio $\phi:=\frac{1+\sqrt{5}}{2}$, the continued fraction expansion of $a$ is $[1;1,\dots]$. In this special case the sequence $(q_k)_{k\in \N}$ is the sequence of Fibonacci numbers. Thus the Ostrowski representation based on the golden ratio is precisely the better known \textbf{Zeckendorf representation} \cite{Zeckendorf}.\newline

In this paper, we will study the following question: given the continued fraction expansion of $a$ and the Ostrowski representation of two natural numbers based on $a$, is there an easy way to compute the Ostrowski representation of their sum? Ahlbach, Usatine, Frougny and Pippenger \cite{EffectiveZeckendorf} give an elegant algorithm to calculate the sum of two natural numbers in Zeckendorf representations. In this paper we generalize their work and present an elementary three pass algorithm for computing the sum of two natural numbers given in Ostrowski representation. To be precise, we show that given the continued fraction expansion of $a$, addition of two $n$-digit numbers in Ostrowski representation based on $a$ can be computed by three linear passes over the input sequence and hence in time $O(n)$. If $a$ is  a quadratic number\footnote{A real number $a$ is \textbf{quadratic} if it is a solution to a quadratic equation with rational coefficients}, we establish that the graph of addition in the Ostrowski numeration system based on $a$ can be recognized by a finite automaton (see Theorem B for a precise statement). When $a$ is the golden ratio, this result is due to Frougny \cite{Frougny}\footnote{In private communication Frougny proved that whenever the continued fraction expansion of a has period 1, the stronger statement that addition in the Ostrowski numeration system associated with $a$ can be obtained by three linear passes, one left-to-right, one right-to-left and one left-to-right, where each of the passes defines a finite sequential transducer.}. \newline

Ostrowski representations arose in number theory and have strong connections to the combinatorics of words (see for example Berth\'e \cite{Berthe}). However, our main motivation for studying Ostrowski representations is their application to decidability and definability questions in mathematical logic. The results in this paper (in particular Theorem B below) play a crucial role in the work of the first author \cite{H-Twosubgroups} on expansions of the real additive group. Here we will present the following application of our work on addition in the Ostrowski numeration system to the study of expansions of Presburger Arithmetic (see Theorem A). \newline

Let $a$ be quadratic. Since the continued fraction expansion of $a$ is periodic, there is a natural number $c:= \max_{k\in \N} a_k$. Let $\Sigma_a= \{0,\dots,c\}$. So $\rho_a(N)$ is a $\Sigma_a$-word. Let $V_a : \N \to \N$ be the function that maps $x \geq 1$ with Ostrowski representation $b_n\dots b_1$ to the least $q_k$ with $b_{k+1} \neq 0$, and $0$ to $1$.

\begin{thmA} Let $a$ be quadratic. A set $X\subseteq \N^n$ is definable in $(\N,+,V_a)$ if and only if $X$ is $a$-recognizable. Hence the theory of $(\N,+,V_a)$ is decidable.
\end{thmA}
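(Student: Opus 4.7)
The plan is to prove the equivalence in the style of Büchi--Bruyère, showing the two implications separately, and then derive decidability from the effective decidability of emptiness for finite automata.

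For the direction from definability in $(\N,+,V_a)$ to $a$-recognizability, I would proceed by induction on the complexity of the defining formula. The only non-trivial atomic cases are the graph of addition, which is $a$-recognizable by Theorem B, and the graph of $V_a$, which is $a$-recognizable because an automaton reading the Ostrowski representations of $x$ and $y$ synchronously (after appropriate padding) can verify that $y = q_k$ for some $k$ and that $q_k$ is the least convergent denominator appearing in $\rho_a(x)$ with non-zero coefficient. Boolean connectives and existential quantification correspond respectively to Boolean operations and projection on $a$-recognizable sets, both of which preserve recognizability.

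For the reverse direction I would show that every $a$-recognizable set is definable in $(\N,+,V_a)$ by simulating an accepting run of the automaton inside the structure. The preparatory step is to make position information along $\rho_a(x)$ first-order accessible: the set $Q = \{q_k : k \geq 0\}$ is definable as the image of $V_a$ (since $V_a(q_k) = q_k$), the natural order on $\N$ is definable from $+$, and the successor function on $Q$ is therefore definable. Using $V_a$ together with repeated subtraction of multiples of $q_k$, the ``digit at position $k$ in $\rho_a(x)$'' function can then be expressed in first-order terms. An accepting computation of an automaton $\mathcal{A}$ on $\rho_a(x_1,\ldots,x_n)$ is encoded by an existentially quantified tuple of natural numbers whose Ostrowski representations record, at each position, the state of the run at that position; the transition relation of $\mathcal{A}$ is checked locally using the definable digit function, and the initial and accepting conditions are checked at the endpoints.

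The main obstacle is this second direction: the first-order coding of digit sequences and of runs of $\mathcal{A}$ is technically delicate, and the Ostrowski admissibility constraints (which depend on the partial quotients $a_k$ and couple consecutive digits) must be respected uniformly throughout the simulation. Once the equivalence is established, decidability of $\Th(\N,+,V_a)$ is immediate: given a sentence $\varphi$, effectively translate it into an automaton recognizing the set it defines and decide emptiness of that automaton.
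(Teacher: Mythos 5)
Your proposal follows essentially the same route as the paper: the definability-to-recognizability direction is reduced (via closure of recognizable relations under Boolean operations and projection, i.e.\ Hodgson's theorem) to the atomic relations, with addition handled by Theorem B, and the converse is proved by the Villemaire-style simulation of an accepting run through existentially quantified witnesses whose Ostrowski digits record the states, with decidability then following from Kleene's theorem. The one delicate point you flag --- keeping the run-coding witnesses Ostrowski-admissible --- is resolved in the paper by coding each state with witnesses drawn from the sets $U_e$ and $U_o$ of numbers whose digits are $0$ or $1$ and supported only on even (respectively odd) positions, which makes admissibility automatic.
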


We say a set $X \subseteq \N$ is \textbf{$a$-recognizable} if $0^*\rho_a(X)$ is recognizable by a finite automaton, where $0^*\rho_a(X)$ is the set of all $\Sigma_a$-words of the form $0\dots 0 \rho_a(N)$ for some $N\in X$.
The definition of $a$-recognizability for subsets of $\N^n$ is slightly more technical and we postpone it to Section 3. The decidability of the theory of $(\N,+,V_a)$ follows immediately from the first part of the statement of Theorem A and Kleene's theorem (see Khoussainov and Nerode \cite[Theorem 2.7.2]{automata}) that the emptiness problem for finite automata is decidable. Bruy\`{e}re and Hansel \cite[Theorem 16]{Bruyere} establish Theorem A when $a$ is the golden ratio. In fact, they show that Theorem A holds for linear numeration systems whose characteristic polynomial is the minimal polynomial of a Pisot number. A similar result for numeration systems based on $(p^n)_{n\in \N}$, where $p>1$ is an integer, is due to B\"uchi \cite{B60} (for a full proof see Bruy{\`e}re, Hansel, Michaux and and Villemaire \cite{BVHM}). It is known by Shallit \cite{Shallit} and Loraud \cite[Theorem 7]{Loraud} that the set $\N$ is  $a$-recognizable if and only if $a$ is quadratic. So in general the conclusion of Theorem A fails when $a$ is not quadratic. \newline

A few remarks about the proof of Theorem A are in order. The proof that every definable set is $a$-recognizable, is rather straightforward, and we follow a similar argument from Villemaire \cite{Villemaire}. For the other direction, by Hodgson \cite{Hodgson} it is enough to prove that $\N$, the graph of $V_a$ and the graph of $+$ are $a$-recognizable. While it is easy to check the $a$-recognizability of the graph of $V_a$, we have to use our algorithm for addition in Ostrowski numeration systems to show that the graph of $+$ is $a$-recognizable. Thus most of the work towards proving Theorem A goes into showing the following result.

\begin{thmB} Let $a$ is a quadratic. Then $\{ (x,y,z) \in \N^3 \ : \ x+y = z \}$ is $a$-recognizable.
\end{thmB}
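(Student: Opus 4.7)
The plan is to combine the three-pass addition algorithm developed in the body of the paper with the fact, due to Lagrange, that the continued fraction expansion of a quadratic $a$ is eventually periodic. In particular the partial quotients $(a_k)_{k \in \N}$ are bounded by some constant $c$, and the sequence $(a_k)_k$ is produced by a finite automaton that tracks the current position modulo the period together with a flag indicating whether we have entered the periodic regime.

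First I would show that each of the three linear passes of the algorithm can be realized by a finite sequential transducer. Each pass rewrites a word over a finite alphabet extending $\Sigma_a$ (to accommodate intermediate sums and local corrections), inspects only a bounded window of consecutive positions, and uses only finitely many nearby partial quotients $a_{k-j}, \ldots, a_{k+j}$ at each step. Because the partial quotients are eventually periodic, the relevant $a_k$'s can be encoded into the transducer's state. The passes may alternate direction (left-to-right versus right-to-left), but this is harmless since regular relations are closed under reversal.

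Next I would introduce intermediate variables $w_1, w_2$ for the outputs of the first and second passes, and consider the three relations
\begin{align*}
R_1 &= \{ (u, v, w_1) : \text{pass 1 on input } (u,v) \text{ outputs } w_1 \}, \\
R_2 &= \{ (w_1, w_2) : \text{pass 2 on input } w_1 \text{ outputs } w_2 \}, \\
R_3 &= \{ (w_2, w) : \text{pass 3 on input } w_2 \text{ outputs } w \},
\end{align*}
each of which is regular over an appropriate product alphabet by the previous paragraph. By closure of regular relations under cylindrifications, intersection, and projection,
\[
\{ (u,v,w) : \exists w_1, w_2 \ (u,v,w_1) \in R_1,\ (w_1,w_2) \in R_2,\ (w_2,w) \in R_3 \}
\]
is also regular. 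Its restriction to triples $(\rho_a(x), \rho_a(y), \rho_a(z))$, padded to a common length by leading zeros, is exactly the graph of addition in the Ostrowski numeration system, which is what is required.

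The main obstacle, and where quadraticity of $a$ is genuinely used, will be to verify that the intermediate words $w_1, w_2$ lie in a finite alphabet: the digit values and local corrections produced by each pass must be bounded uniformly in terms of $c$. Combined with the eventual periodicity of $(a_k)_k$, this boundedness is exactly what allows each pass to be captured by a finite-state machine; without quadraticity, both the alphabet and the partial-quotient data would grow unboundedly and this approach would break down, consistent with the failure of $a$-recognizability of $\N$ itself in the non-quadratic case noted in the introduction.
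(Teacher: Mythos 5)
Your proposal follows essentially the same route as the paper: realize each of the three passes as a finite-state relation whose states encode the position within the (eventually periodic) sequence of partial quotients together with the contents of the bounded moving window, handle the direction reversal of the middle pass, and then compose the three regular relations via intermediate variables and project. The one caveat is that you need only the regularity of the graph of each pass (which your $R_1,R_2,R_3$ paragraph correctly uses), not the stronger claim that each pass is a sequential transducer, which the paper does not establish in general.
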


We end this introduction with a brief comment about possible applications of Theorem B to the theory of Sturmian words\footnote{When preparing this paper, the authors were completely unaware of the connection between Sturmian words and Ostrowski representations. We would like to thank the anonymous referee to point out this connection.}. Let $a$ be a real number in $[0,1]$. We define
\[
f_{a}(n):=\lfloor (n+1)a \rfloor - \lfloor na \rfloor,
\]
and we denote the infinite $\{0,1\}$-word $f_a(1)f_a(2)\dots$ by $\boldsymbol{f}_{a}$. This word is called the \textbf{Sturmian characteristic word} with slope $a$. If $a$ is a quadratic irrational, the set $\{n \in \N \ : f_a(n) =1\}$ is $a$-recognizable (see \cite[Theorem 9.1.15]{Automatic}). Du, Mousavi, Schaeffer and Shallit \cite{Duetal} use this connection and Theorem B in the case of the golden ratio $\phi$ to prove results about the Fibonacci word (that is the Sturmian characteristic word with slope $\phi -1$). Because of Theorem B the techniques in \cite{Duetal} can be applied to any characteristic Sturmian word whose slope is a quadratic irrational.

\subsection*{Notation} We denote the set of natural numbers by $\{0,1,2,\dots\}$ by $\N$. Definable will always mean definable without parameters. If $\Sigma$ is a finite set, we denote the set of $\Sigma$-words by $\Sigma^*$. If $a \in \Sigma$ and $X\subseteq \Sigma^*$, we denote the set $\{ a\dots aw \ : \ w \in X\}$ of $\Sigma$-words by $a^*X$. If $x \in X^m$ for some set $X$, we write $x_i$ for the $i$-th coordinate of $x$.

\section{Ostrowski addition}

Fix a real number $a$ with continued fraction expansion $[a_0;a_1,\dots,a_k,\dots]$. In this section we present an algorithm to compute the Ostrowski representations based on $a$ of the sum of two natural numbers given in Ostrowski representation based on $a$. Since we will only consider Ostrowski representation based on $a$, we will omit the reference to $a$. In the special case that $a$ is the golden ratio, our algorithm is exactly the one presented in \cite{EffectiveZeckendorf}. Although it is not strictly necessary, the reader might find it useful to read \cite[Section 2]{EffectiveZeckendorf} first.\\

\noindent Let $M,N \in \N$ and let $x_n\dots x_1, y_n\dots y_1$ be the Ostrowski representations of $M$ and $N$. We will describe an algorithm that given the continued fraction expansion of $a$ calculates the Ostrowski representation of $M+N$. Let $s$ be the word $s_{n+1}s_n\dots s_1$ given by
\[
s_i := x_i + y_i,
\]
for $i=1,\dots,n$ and $s_{n+1}:=0$. For ease of notation, we set $m:=n+1$.\\

\noindent  The algorithm consists of three linear passes over $s$: one left-to-right, one right-to-left and one left-to-right. These three passes will change the word $s$ into a word that is the Ostrowski representation of $M+N$. The first pass converts $s$ into a word whose digit at position $k$ is smaller or equal to $a_k$. The idea how to achieve this, is as follows. We will argue (see Lemma \ref{lem:smallfirst}) that whenever the digit at position $k$ is larger or equal to $a_k$, then the preceding digit has to be less than $a_{k+1}$. Using \eqref{eq:1} we can then decrease the digit at position $k$ by $a_k$, without increasing the one at position $k+1$ above $a_{k+1}$, and without changing the value the word represents. The resulting word might not yet be an Ostrowski representation of $M+N$, because the digit at position $k$ may be $a_k$ and not followed by $0$. With the second and third pass we eliminate all such occurrences.\\

 \noindent The first step is an algorithm that makes a left-to-right pass over the sequence $s_m\dots s_1$ starting at $m$. That means that it starts with the most significant digit, in this case $s_m$, and works its way down to the least significant digit $s_1$. The algorithm can best be described in terms of a moving window of width four. At each step, we only consider the entries in this window. After any possible changes are performed, the window moves one position to the right. When the window reaches the last four digits, the changes are carried out as usual. Afterwards, one final operation is performed on the last three digits. The precise algorithm is as follows. Given $s=s_m\dots s_1$, we will recursively define for every $k \in \N$ with $3 \leq k \leq m+1$, a word
\[
z_k:= z_{k,m}z_{k,m-1}\dots z_{k,2}z_{k,1}.
\]

\begin{algo} Let $k=m+1$. Then set
\[
z_{m+1} := s_m\dots s_1.
\]

\noindent Let $k\in \N$ with $4 \leq k < m+1$. We now define $z_k= z_{k,m}z_{k,m-1}\dots z_{k,2}z_{k_1}$:

\begin{itemize}
\item for $i\notin \{k,k-1,k-2,k-3\}$, we set $z_{k,i} = z_{k+1,i}$,
\item the subword $z_{k,k}z_{k,k-1}z_{k,k-2}z_{k,k-3}$ is determined as follows: \\

\begin{enumerate}
\item[(A1)] if $z_{k+1,k}<a_k, z_{k+1,k-1} > a_{k-1}$ and $z_{k+1,k-2}=0$,
\[
z_{k,k}z_{k,k-1}z_{k,k-2}z_{k,k-3}=(z_{k+1,k}+1)(z_{k+1,k-1}-(a_{k-1}+1))(a_{k-2}-1)(z_{k+1,k-3}+1)
\]
\item[(A2)] if $z_{k+1,k}<a_k, a_{k-1}\leq z_{k+1,k-1} \leq 2a_{k-1}$ and $z_{k+1,k-2}>0$,
\[
z_{k,k}z_{k,k-1}z_{k,k-2}z_{k,k-3}=(z_{k+1,k}+1)(z_{k+1,k-1}-a_{k-1})(z_{k+1,k-2}-1)(z_{k+1,k-3})
\]
\item[(A3)] otherwise,
\[
z_{k,k}z_{k,k-1}z_{k,k-2}z_{k,k-3}=z_{k+1,k}z_{k+1,k-1}z_{k+1,k-2}z_{k+1,k-3}.
\]
\end{enumerate}
\end{itemize}

\vspace{0.5cm}
Let $k=3$. We now define $z_3=z_{3,m}\dots z_{3,1}$:
\begin{itemize}
\item for $i\notin\{1,2,3\}$, we set $z_{3,l} = z_{4,l}$,
\item the subword $z_{3,3}z_{3,2}z_{3,1}$ is determined as follows: \\
\begin{enumerate}
\item[(B1)] if $z_{4,3} < a_3$, $z_{4,2} > a_2$ and $z_{4,1}=0$,
\[
z_{3,3}z_{3,2}z_{3,1}=(z_{4,3}+1)(z_{4,2}-(a_2+1))(a_1-1),
\]
\item[(B2)] if $z_{4,3} < a_3$, $z_{4,2} \geq a_2$ and $a_1 \geq z_{4,1}>0$,
\[
z_{3,3}z_{3,2}z_{3,1}=(z_{4,3}+1)(z_{4,2}-a_2)(z_{4,1}-1),
\]
\item[(B3)]  if $z_{4,3} < a_3$, $z_{4,2} \geq a_2$ and $z_{4,1}>a_1$,
\[
z_{3,3}z_{3,2}z_{3,1}=(z_{4,3}+1)(z_{4,2}-a_2+1)(z_{4,1}-a_1-1),
\]
\item[(B4)]  if $z_{4,2} < a_2$ and $z_{4,1}\geq a_1$,
\[
z_{3,3}z_{3,2}z_{3,1}= z_{4,3}(z_{4,2}+1)(z_{4,1}-a_1).
\]
\item[(B5)] otherwise,
\[
z_{3,3}z_{3,2}z_{3,1}=z_{4,3}z_{4,2}z_{4,1}.
\]
\end{enumerate}
\end{itemize}
\end{algo}

\noindent When we speak of \textbf{the entry at position $l$ after step $k$}, we mean $z_{k,l}$. When $z_{k+1,l} \neq z_{k,l}$, we say that at step $k$ the entry in position $l$ was changed. It follows immediately from the algorithm that the only entries changed at step $k$, are in position $k,k-1,k-2$ or $k-3$.\\

\noindent The goal of Algorithm 1 is to produce a word whose entry at position $k$ is smaller or equal to $a_k$, and which represents the same value as $s$. The following two Propositions make this statement precise.

\begin{prop} Algorithm 1 leaves the value represented unchanged. That is, for every $k \in \N$ with $3\leq k \leq m+1$
\[
\sum_{i=0}^{m} z_{k,i+1}q_i = \sum_{i=0}^{m} s_{i+1}q_i.
\]
\end{prop}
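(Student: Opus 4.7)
The plan is to proceed by downward induction on $k$, starting from the base case $k = m+1$ (where $z_{m+1} = s$ by definition, so both sides coincide trivially) and descending to $k = 3$. At each step, I only need to show that the pass from $z_{k+1}$ to $z_k$ preserves the value $\sum_i z_{k,i+1} q_i$, since the entries outside positions $\{k, k-1, k-2, k-3\}$ (or $\{3,2,1\}$ for the final step) are copied unchanged. So the whole proof reduces to verifying that each rewrite rule (A1), (A2), (A3), (B1)--(B5) is value-preserving.

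The key tool is the Ostrowski recurrence \eqref{equation:intro1}, which can be restated as the two identities
\[
q_{k-1} - a_{k-1} q_{k-2} - q_{k-3} = 0 \quad\text{and}\quad q_{k-2} - a_{k-2} q_{k-3} - q_{k-4} = 0.
\]
For each rule I would compute the net change $\Delta_k := \sum_i (z_{k,i+1} - z_{k+1,i+1}) q_i$ and show it is zero. For instance, in case (A1) the net change is
\[
\Delta_k = q_{k-1} - (a_{k-1}+1) q_{k-2} + (a_{k-2}-1) q_{k-3} + q_{k-4},
\]
which regroups as $\bigl(q_{k-1} - a_{k-1} q_{k-2} - q_{k-3}\bigr) + \bigl(- q_{k-2} + a_{k-2} q_{k-3} + q_{k-4}\bigr) = 0$ by the two identities. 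Case (A2) gives $\Delta_k = q_{k-1} - a_{k-1} q_{k-2} - q_{k-3} = 0$ directly, and (A3) gives $\Delta_k = 0$ since no entry is altered.

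The final step $k=3$ is handled by the same bookkeeping, with the caveat that the leftmost position affected is position $1$ (coefficient $q_0 = 1$), and one uses the special values $q_{-1} = 0$, $q_0 = 1$, $q_1 = a_1$ (so $q_1 = a_1 q_0 + q_{-1}$) in the appropriate identity. Concretely, (B1) and (B3) use both recurrences $q_2 = a_2 q_1 + q_0$ and $q_1 = a_1 q_0$, (B2) uses only $q_2 = a_2 q_1 + q_0$, and (B4) uses only $q_1 = a_1 q_0$, while (B5) again makes no change.

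The main ``obstacle'' is purely notational: keeping track of which coefficient $q_i$ is associated to position $l$ (namely $i = l-1$), and matching each case's arithmetic to the right pair of recurrences. There is no conceptual difficulty beyond checking the eight cases mechanically; none of the rules touches an entry outside the four-position window, so the induction step is immediate once $\Delta_k = 0$ is verified.
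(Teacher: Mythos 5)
Your proof is correct and follows essentially the same route as the paper, which simply notes that each rule is value-preserving by the recurrence \eqref{equation:intro1} and concludes by induction on $k$; you have merely written out the case-by-case verification that the paper leaves implicit. The arithmetic in each of your cases (including the boundary identities $q_0=1$ and $q_1=a_1q_0+q_{-1}$ for the B-rules) checks out.
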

\begin{proof} It follows immediately from the recursive definition of the $q_i$'s (see \eqref{equation:intro1}) that each rule of Algorithm 1 leaves the value represented unchanged. Induction on $k$ gives the statement of the Proposition.
\end{proof}

\begin{prop}\label{prop:step1} For $k>1$, $z_{3,k} \leq a_k$ and  $z_{3,1} \leq a_1-1$.
\end{prop}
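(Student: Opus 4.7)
My plan is to prove the proposition by downward induction on $k$, tracking an invariant on the word $z_k$ throughout the left-to-right pass. Concretely, after step $k$ I want to know that (i) the ``finalised'' high positions $j \geq k$ satisfy $z_{k,j} \leq a_j$, (ii) the still-to-be-processed positions $j < k$ retain a mild bound such as $z_{k,j} \leq 2a_j$, and (iii) a transported version of Lemma \ref{lem:smallfirst} still holds near the boundary of the window --- specifically, if $z_{k,k-1} \geq a_{k-1}$ then $z_{k,k} < a_k$, so that the preconditions of rules (A1) or (A2) are available when the window slides one position to the right.

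For the base case $k = m+1$, the word is $z_{m+1} = s$. The entries $s_j = x_j + y_j$ satisfy $s_j \leq 2a_j$ (and $s_1 \leq 2a_1 - 2$) because both $x$ and $y$ are Ostrowski representations, and $s_m = 0$ by construction. The boundary condition (iii) is exactly Lemma \ref{lem:smallfirst} applied to $s$. For the inductive step $k+1 \to k$ with $4 \leq k \leq m$, I would case-split on which of (A1), (A2), (A3) fires. In cases (A1) and (A2), the precondition $z_{k+1,k} < a_k$ is exactly what is needed so that incrementing position $k$ keeps it $\leq a_k$, and the subtraction of $a_{k-1}$ (or $a_{k-1}+1$) at position $k-1$ brings that entry back to $\leq a_{k-1}$, using the bound $z_{k+1,k-1} \leq 2a_{k-1}$ from hypothesis (ii). In case (A3), where neither (A1) nor (A2) applies, I must argue that $z_{k+1,k-1} \leq a_{k-1}$ was already true: if it were larger, the Lemma-\ref{lem:smallfirst}-style condition (iii) carried from step $k+1$ would force $z_{k+1,k} < a_k$, and then one of (A1), (A2) must fire depending on whether $z_{k+1,k-2} = 0$ or not. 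Re-establishing clause (iii) for $z_k$ requires tracking how each rule affects $z_{k,k-2}$ so that the implication ``$z_{k,k-2} \geq a_{k-2} \Rightarrow z_{k,k-1} < a_{k-1}$'' persists.

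I expect the main obstacle to be the terminal step $k = 3$. Here there are five subcases (B1)--(B5), and additionally the strict bound $z_{3,1} \leq a_1 - 1$ at the least significant position must be met with no further ``room below'' for carries. Cases (B3) and (B4) exploit the small-index identities $q_1 = a_1 q_0$ and $q_2 = a_2 q_1 + q_0$ to redistribute mass between positions 1 and 2, and I would have to check that the inductive hypothesis on $z_4$ --- in particular the bounds on $z_{4,2}, z_{4,1}$ inherited from clause (ii) and the strict Ostrowski bound at position $1$ --- lets cases (B1)--(B5) exhaustively cover every configuration compatible with the inductive hypothesis. Carefully enumerating these possibilities against the invariant, rather than performing the straightforward value-preservation computation (which is handled by the preceding proposition), is where the real work of the argument sits.
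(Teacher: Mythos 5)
Your overall strategy --- a downward induction over the first pass that maintains bounds on processed and unprocessed positions together with a boundary condition near the window --- has essentially the same shape as the paper's argument: the paper isolates your clause (iii) as Lemma \ref{lem:smallfirst}, proves it by downward induction on $k$, and then deduces the Proposition by a case analysis on $z_{k+2,k}$. But as written your invariant does not close, and the missing ingredient is concrete. Your clause (ii) with the bound $z_{k,j}\leq 2a_j$ is false: rule (A1) \emph{increments} the entry at the fourth window position $k-3$ before that position is processed, so an unprocessed entry can reach $2a_j+1$. This is not cosmetic, because rule (A2) only applies when $z_{k+1,k-1}\leq 2a_{k-1}$ and rule (A1) only applies when $z_{k+1,k-2}=0$; hence the configuration $z_{k+1,k-1}=2a_{k-1}+1$ with $z_{k+1,k-2}>0$ would fall through to (A3) and leave an entry permanently exceeding $a_{k-1}$. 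What rules this out is precisely the paper's Lemma \ref{lem:aftertwo}: if $z_{k+1,k-1}=2a_{k-1}+1$ then $z_{k+1,k-2}=0$, and if $z_{k+1,k-1}=2a_{k-1}$ then $z_{k+1,k-2}\leq a_{k-2}$. This fact is proved by going back to the original digits $x_{k-1},y_{k-1}$ and is not a consequence of your three clauses; your outline contains no analogue of it. It is needed exactly at your step ``one of (A1), (A2) must fire depending on whether $z_{k+1,k-2}=0$ or not'', which is false without it, and again at the terminal step (to see, e.g., that $z_{4,2}=2a_2+1$ forces $z_{4,1}=0$ so that (B1) applies).

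A second, smaller issue is that your boundary clause (iii), ``$z_{k,k-1}\geq a_{k-1}\Rightarrow z_{k,k}<a_k$'', is not the form on which the induction runs: the borderline case $z_{k+1,k-1}=a_{k-1}$ needs the side condition $z_{k+1,k-2}>0$ (the two-part formulation of Lemma \ref{lem:smallfirst}), since $z_{k+1,k}=a_k$ with $z_{k+1,k-1}=a_{k-1}$ cannot be excluded without first distinguishing whether the entry below is zero, and the re-establishment of (iii) after (A3) fires uses exactly that distinction. Once Lemma \ref{lem:aftertwo} is added and clause (iii) is weakened to the two-case form, your plan does reduce to the paper's proof; without them the inductive step in the cases (A1), (A2) and (A3) cannot be completed.
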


\noindent We will prove the following two lemmas first.

\begin{lem}\label{lem:aftertwo} Let $k\in \N$ and $k\geq 3$. Then
\begin{itemize}
\item[(i)] If $z_{k+1,k-1} = 2a_{k-1}+1$, then $z_{k+1,k-2}=0$.
\item[(ii)] If $z_{k+1,k-1} = 2a_{k-1}$, then $z_{k+1,k-2}\leq a_{k-2}$.
\end{itemize}
\end{lem}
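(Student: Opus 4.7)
The plan is to carry out a direct case analysis on which rule of Algorithm 1 was applied at the two steps that can affect positions $k-1$ and $k-2$ in $z_{k+1}$, combined with the Ostrowski constraints on the inputs $x$ and $y$.

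First I would record several bookkeeping observations from inspecting the rules. Since step $j$ modifies only positions $j, j-1, j-2, j-3$, position $k-1$ in $z_{k+1}$ can have been changed only at steps $k+2$ (as the $(j-3)$-slot, $j=k+2$) and $k+1$ (as the $(j-2)$-slot, $j=k+1$), while position $k-2$ can only have been affected at step $k+1$ (as the $(j-3)$-slot). Reading off the rules: at step $k+2$, $z_{k+2, k-1} = s_{k-1}$ except under rule (A1), which yields $z_{k+2, k-1} = s_{k-1}+1$. At step $k+1$, rule (A1) sets $z_{k+1, k-1} = a_{k-1}-1$ and $z_{k+1, k-2} = z_{k+2, k-2}+1$; rule (A2) sets $z_{k+1, k-1} = z_{k+2, k-1}-1$ and leaves position $k-2$ unchanged; rule (A3) leaves both positions unchanged. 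Finally $z_{k+2, k-2} = s_{k-2}$ (no earlier step touches position $k-2$), and the Ostrowski bound $x_{k-1}, y_{k-1} \leq a_{k-1}$ forces $s_{k-1} \leq 2a_{k-1}$. The edge case $k+1 = m+1$ (where $z_{k+1}$ is the initial word) is immediate from this last bound.

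For (i), if $z_{k+1, k-1} = 2a_{k-1}+1$, rules (A1) and (A2) at step $k+1$ are both ruled out (they produce $a_{k-1}-1$ and at most $2a_{k-1}$, respectively), so rule (A3) was applied. This forces $z_{k+2, k-1} = 2a_{k-1}+1$, which in turn forces rule (A1) at step $k+2$ with $s_{k-1} = 2a_{k-1}$, whence $x_{k-1} = y_{k-1} = a_{k-1}$ and the Ostrowski condition gives $s_{k-2} = 0$; since (A3) preserves position $k-2$, we get $z_{k+1, k-2} = 0$. For (ii), if $z_{k+1, k-1} = 2a_{k-1}$, the analogous split gives three subcases: rule (A2) at step $k+1$ with $s_{k-1} = 2a_{k-1}$; rule (A3) at step $k+1$ with $s_{k-1} = 2a_{k-1}$; or rule (A3) at step $k+1$ combined with rule (A1) at step $k+2$ and $s_{k-1} = 2a_{k-1}-1$. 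The first two subcases yield $s_{k-2} = 0$ as in (i). In the third, the only admissible decompositions $x_{k-1} + y_{k-1} = 2a_{k-1}-1$ with both summands $\leq a_{k-1}$ force one summand to equal $a_{k-1}$, so the Ostrowski condition makes its predecessor zero and yields $s_{k-2} \leq a_{k-2}$. In every subcase the rule applied at step $k+1$ leaves position $k-2$ unchanged, so $z_{k+1, k-2} = s_{k-2} \leq a_{k-2}$. The main obstacle is simply the bookkeeping: verifying that the case split is exhaustive and that the Ostrowski constraints on $x$ and $y$ correctly propagate through the two-step modification of the word.
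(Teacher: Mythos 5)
Your proof is correct and follows essentially the same route as the paper's: both trace positions $k-1$ and $k-2$ back through the only two steps ($k+2$ and $k+1$) that can affect them, use the bound $s_{k-1}\leq 2a_{k-1}$ to determine which rules were applied, and then invoke the Ostrowski condition on $x$ and $y$ to control $s_{k-2}$. The paper organizes the case split by whether $x_{k-1}=y_{k-1}=a_{k-1}$ rather than by which rule fired at step $k+1$, but the substance is identical.
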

\begin{proof} For (i), let $z_{k+1,k-1} = 2a_{k-1}+1$. It follows immediately from the rules of the algorithm that $z_{k+2,k-1}=2a_{k-1}+1$ and $z_{m+1,k-1}=2a_{k-1}$. So $x_{k-1}$ and $y_{k-1}$ are both equal to $a_{k-1}$. Hence $x_{k-2}=0, y_{k-2}=0$ and $z_{m+1,k-2}=0$. The first time that the entry in position $k-2$ can be changed, is at step $k+1$, when rule (A1) is applied. However, since $z_{k+2,k-1} = 2a_{k-1}+1$, rule (A1) was not applied at step $k+1$. Thus $z_{k+1,k-2}=z_{m+1,k-2}=0$.\\

\noindent For (ii), let $z_{k+1,k-1} = 2a_{k-1}$. If $x_{k-1}=y_{k-1}=a_{k-1}$, we argue as before to get $z_{k+1,k-2}=0$. Suppose that either $x_{k-1}\neq a_{k-1}$ or $y_{k-1} \neq a_{k-1}$. Because $z_{k+1,k-1} = 2a_{k-1}$, we get that $x_{k-1} + y_{k-1}=2a_{k-1}-1$, and  that the entry in position $k-1$ had to be increased by $1$ at step $k+2$. Hence either $x_{k-1} = a_{k-1}$ or $y_{k-1}=a_{k-1}$. By the definition of Ostrowski representations, $x_{k-2} + y_{k-2} \leq a_{k-2}$. Thus $z_{k+2,k-2} \leq a_{k-2}$. Since the entry in position $k-1$ was increased by $1$ at step $k+2$, $z_{k+2,k}=a_k-1$. Thus no change is made at step $k+1$. It follows that $z_{k+1,k-2} = x_{k-2} + y_{k-2} \leq a_{k-2}$.
\end{proof}

\begin{lem}\label{lem:smallfirst} Let $k \in \N$ and $3 \leq k\leq m$.
\begin{itemize}
\item[(i$)_k$] If $z_{k+1,k-1}>a_{k-1}$, then $z_{k+1,k} < a_k$.
\item[(ii$)_k$] If $z_{k+1,k-1}=a_{k-1}$ and $z_{k+1,k-2}>0$, then $z_{k+1,k} < a_{k}$.
\end{itemize}
\end{lem}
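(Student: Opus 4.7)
The plan is to prove $(i)_k$ and $(ii)_k$ simultaneously by downward induction on $k$, from $k=m$ down to $k=3$. The base case $k=m$ is immediate: $z_{m+1,m}=s_m=0<a_m$ by construction, so the conclusion of both statements holds unconditionally.

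For the inductive step, assume $(i)_{k+1}$ and $(ii)_{k+1}$, and split on which of (A1), (A2), (A3) was applied at step $k+1$. Case (A1) forces $z_{k+1,k-1}=a_{k-1}-1$, so the hypotheses of both $(i)_k$ and $(ii)_k$ fail and there is nothing to prove. Case (A2) gives $z_{k+1,k}=z_{k+2,k}-a_k$ with precondition $a_k\leq z_{k+2,k}\leq 2a_k$; the hypothesis of either $(i)_k$ or $(ii)_k$ transfers to $z_{k+2,k-1}\geq a_{k-1}+1$, and Lemma \ref{lem:aftertwo}(ii) applied with $k$ replaced by $k+1$ then rules out $z_{k+2,k}=2a_k$, so $z_{k+1,k}\leq a_k-1<a_k$.

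Case (A3) is the substantive one, since now $z_{k+1,\cdot}=z_{k+2,\cdot}$ on the relevant positions and the hypotheses transfer unchanged. I would argue by contradiction: assume $z_{k+2,k}\geq a_k$, and verify every precondition of rule (A2) at step $k+1$. The inequality $a_k\leq z_{k+2,k}$ is the assumption; $z_{k+2,k-1}>0$ is in the hypothesis; for $z_{k+2,k}\leq 2a_k$, a trace through steps $k+2$ and $k+3$ (positions are untouched at steps $\geq k+4$, where they start from $s_k\leq 2a_k$) gives the crude bound $z_{k+2,k}\leq 2a_k+1$, after which Lemma \ref{lem:aftertwo}(i) at $k+1$ combined with $z_{k+2,k-1}>0$ rules out the extreme value; finally $z_{k+2,k+1}<a_{k+1}$ follows from $(i)_{k+1}$ when $z_{k+2,k}>a_k$, and from $(ii)_{k+1}$ when $z_{k+2,k}=a_k$, the latter applicable since $z_{k+2,k-1}>0$. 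But then (A2), not (A3), would have been applied at step $k+1$, a contradiction. Hence $z_{k+2,k}<a_k$, and so $z_{k+1,k}<a_k$.

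The main obstacle is case (A3), where the inductive hypotheses, Lemma \ref{lem:aftertwo}, and a small case split on whether $z_{k+2,k}$ equals or exceeds $a_k$ must be orchestrated together to verify every precondition of (A2). A secondary point is confirming the crude bound $z_{k+2,k}\leq 2a_k+1$ by inspection of how a single position is modified as it sweeps through the four slots of the moving window.
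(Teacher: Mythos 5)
Your proof is correct and follows essentially the same route as the paper's: the same downward induction, the same appeal to Lemma \ref{lem:aftertwo} at index $k+1$ to cap $z_{k+2,k}$, and the same punchline that all preconditions of rule (A2) would have been met at step $k+1$. Organizing the cases by which rule fired at step $k+1$, rather than by the possible values of $z_{k+2,k}$ as the paper does, is only a cosmetic reshuffling, though it does make the (A1) and (A2) cases slightly cleaner since no contradiction is needed there.
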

\begin{proof} We prove the statements by induction on $k$. For $k=m$, both (i$)_{m}$ and (ii$)_{m}$ hold, because $z_{m+1,m} = 0$.
For the induction step, suppose that (i$)_{k+1}$ and (ii$)_{k+1}$ hold. We need to establish (i$)_{k}$ and (ii$)_k$.\\

\noindent We first show (i$)_k$. Suppose $z_{k+1,k-1} > a_{k-1}$. Towards a contradiction, assume that $z_{k+1,k} \geq a_k$. Since $z_{k+1,k-1}>a_{k-1}$ and the algorithm does not increase the entry in position $k-1$ above $a_{k-1}$ at step $k+1$, we have $z_{k+2,k-1}>a_{k-1}$. Because $z_{k+1,k} \geq a_k$ and the algorithm either leaves the entry in position $k$ at step $k+1$ untouched or decreases it by $a_k$ or $a_k+1$, we get that either $z_{k+2,k}=z_{k+1,k}$ or $z_{k+2,k} \in \{2a_k,2a_k+1\}$. We handle these cases separately.\\

\noindent Suppose $z_{k+2,k} \in \{2a_k,2a_k+1\}$. By (i$)_{k+1}$, $z_{k+2,k+1} < a_{k+1}$. It follows from Lemma \ref{lem:aftertwo} that, if $z_{k+2,k}=2a_k$, then $z_{k+2,k-1} \leq a_{k-1}$, and if $z_{k+2,k}=2a_k+1$, then $z_{k+2,k-1}=0$. Since one of the first two rules is applied at step $k+1$, we have that $z_{k+1,k-1} < a_{k-1}$. This contradicts our assumption that $z_{k+1,k-1} > a_{k-1}$.\\

\noindent Now, we suppose that $z_{k+2,k}=z_{k+1,k}$ and $z_{k+2,k}=a_k$. Because $z_{k+2,k-1} > a_{k-1}$, we get $z_{k+2,k+1} < a_{k+1}$ by (ii$)_{k+1}$. Hence $z_{k+1,k} = z_{k+2,k} - a_k$ by rule (A2). This contradicts $z_{k+1,k} = z_{k+2,k}$. \\

\noindent Finally, assume that $z_{k+2,k}=z_{k+1,k}$ and $z_{k+2,k}>a_k$. By (i$)_{k+1}$, $z_{k+2,k+1} < a_{k+1}$.  Since $z_{k+2,k-1} > a_{k-1}$, we have $z_{k+2,k+1}< 2a_{k+1}$ by Lemma \ref{lem:aftertwo}.  Applying rule (A2) gives $z_{k+1,k}=z_{k+2,k}-a_{k}$. As before, this is a contradiction.\\

\noindent We now prove (ii$)_k$. Let  $z_{k+1,k-1}=a_{k-1}$ and $z_{k+1,k-2}>0$. Suppose towards a contradiction that $z_{k+1,k} \geq a_{k}$. Then $z_{k+2,k} \geq a_{k}$, because the algorithm never increases the entry at position $k$ at step $k+1$. Since $z_{k+1,k-1}=a_{k-1}$, either $z_{k+2,k-1} = a_{k-1}+1$ (in this case rule (A2) was applied) or $z_{k+2,k-1} =a_{k-1}$ (in this case rule (A3) was applied). In both cases, $z_{k+2,k+1}< a_{k+1}$ by  (i$)_{k+1}$ and (ii$)_{k+1}$. Since $z_{k+2,k-1} > 0$, $z_{k+2,k}\leq 2a_k$ by Lemma \ref{lem:aftertwo}(i). Hence rule (A2) was applied at step $k+1$, and $z_{k+2,k-1}=a_{k-1}+1$. By Lemma \ref{lem:aftertwo}(ii), $z_{k+2,k} < 2a_k$.
Thus $z_{k+1,k} = z_{k+2,k} - a_k < a_k$, a contradiction.
\end{proof}

\begin{proof}[Proof of Proposition \ref{prop:step1}]  Suppose $k\geq 3$. Because the entry at position $k$ is not changed after step $k$, it is enough to show that $z_{k,k} \leq a_k$. We have to consider four different cases depending on the value of $z_{k+2,k}$.\\

 \noindent First, consider the case that $z_{k+2,k}< a_k$. Since the algorithm does not increase the entry in position $k$ at step $k+1$, $z_{k+1,k}<a_k$. Thus $z_{k,k} \leq z_{k+1,k} + 1 \leq a_k$. \\

 \noindent Suppose $z_{k+2,k} = a_k$ and $z_{k+2,k-1} > 0$. By Lemma \ref{lem:smallfirst}(ii), $z_{k+2,k+1} < a_{k+1}$. By rule (A2), $z_{k+1,k} = 0$. Hence $z_{k,k}\leq 1 \leq a_k$.\\

 \noindent Suppose $z_{k+2,k} = a_k$ and $z_{k+2,k-1} = 0$. Then no change is made at step $k+1$. Thus $z_{k+1,k}=a_k$ and $z_{k+1,k-1}=0$. Since no change is made at step $k$ as well, $z_{k,k} = a_k$. \\

 \noindent Finally, consider $z_{k+2,k}> a_k$. By Lemma \ref{lem:smallfirst}(i), $z_{k+2,k+1} < a_{k+1}$ . Hence either rule (A1) or rule (A2) is applied. We get that $z_{k+1,k} \leq  a_k$. If $z_{k+1,k}=a_k$, then $z_{k,k}=a_k$. If $z_{k+1,k}<a_k$, then $z_{k,k} \leq z_{k+1,k} + 1 \leq a_k$.\\

\noindent Now suppose that $k< 3$. We have to show that $z_{3,k} \leq a_k$. We do so by considering several different cases depending on the values of $z_{4,2}$ and $z_{4,1}$. By Lemma \ref{lem:smallfirst}, if $z_{4,2}>a_2$, or, if $z_{4,2}=a_2$ and $z_{4,1}>0$, then $z_{4,3}<a_3$. If $z_{4,2}=a_2$ and $z_{4,1}=0$, then no changes was made.\\

\noindent Suppose that $z_{4,2}=2a_2+1$. By Lemma \ref{lem:aftertwo}, $z_{4,1}=0$ . By rule (B1), $z_{3,2} = a_2$, $z_{3,1} = a_1 -1$ and $z_{3,3} =z_{4,3}+1 \leq a_3$.\\

\noindent Now suppose that $z_{4,2} = 2a_2$. We get $z_{4,1} \leq a_1$ from Lemma \ref{lem:aftertwo}. Then either rule (B1) or  rule (B2) was applied. In both cases we get that $z_{3,2}=a_2$, $z_{3,1} = z_{4,1} - 1 \leq a_1 - 1$ and $z_{3,3} =z_{4,3}+1 \leq a_3$.\\

\noindent Consider that $a_2 \leq z_{4,2} < 2a_2$ and $z_{4,1}>0$. Here either rule (B2) or rule (B3) was used. Then $z_{3,2}\leq a_2$, $z_{3,1} \leq a_1-1$ and $z_{3,3} =z_{4,3}+1 \leq a_3$.\\

\noindent The last case we have to consider is $z_{4,2}<a_2$. Depending on whether $z_{4,1}\geq a_1$, we applied either rule (B4) or rule (B5). Since $z_{4,1} \leq 2a_1-1$, we get $z_{3,1} \leq a_1 -1$ and $z_{3,2} \leq z_{3,2} +1 \leq a_2$ in both cases.
\end{proof}

\noindent We will now describe the second step towards determining the Ostrowski representation of $M+N$. This second algorithm will be a right-to-left pass over $z_3$. Given the word $z_{3,m}z_{3,m-1}\dots z_{3,2}z_{3,1}$, we will recursively generate a word
\[
w_k= w_{k,m+1}w_{k,m}\dots w_{k,2}w_{k,1}
\]
 for each $k\in N$ with $k \in \N$ with $2\leq k \leq m+1$. At each step only elements in a moving window of length $3$ are changed. Because the algorithm moves right to left, we will start by defining $w_2$, and then recursively define $w_k$ for $k\geq 2$.

\begin{algo}Let $k=2$. Then set
\[
w_{2} := 0z_{3,m}z_{3,m-1}\dots z_{3,2}z_{3,1}.
\]

\noindent Let $k \in \N$ with $2< k \leq m+1$. We now define $w_k=w_{k,m+1}\dots w_{k,1}$:
\begin{itemize}
\item for $i\notin \{k,k-1,k-2\}$, we set $w_{k,i} := w_{k-1,i}$.
\item if $w_{k-1,k} < a_k$, $w_{k-1,k-1} = a_{k-1}$ and $w_{k-1,k-2}>0$, set
\[
w_{k,k}w_{k,k-1}w_{k,k-2} := (w_{k-1,k}+1)0(w_{k-1,k-2}-1),
\]
otherwise
\[
w_{k,k}w_{k,k-1}w_{k,k-2} := w_{k-1,k}w_{k-1,k-1}w_{k-1,k-2}.
\]
\end{itemize}
\end{algo}

\noindent Again it follows immediately from Equation \eqref{equation:intro1} that this algorithm leaves the value represented unchanged:
\[
\sum_{k=0}^{m} w_{m+1,k+1}q_k = \sum_{k=0}^{m} z_{3,k+1}q_k.
\]
By Proposition \ref{prop:step1} and the rules of Algorithm 2, $w_{k,i} \leq a_k$ for every $k=2,\dots,m+1$ and $i=1,\dots, m+2$.

\begin{lem}\label{lem:step2} There is no $k\in \N$ such that
\begin{itemize}
\item $w_{m+1,k} = a_k$
\item $w_{m+1,k-1} < a_{k-1}$,
\item $w_{m+1,k-2} = a_{k-2}$, and
\item $w_{m+1,k-3} > 0$.
\end{itemize}
\end{lem}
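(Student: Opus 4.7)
The plan is to argue by contradiction: suppose $k$ satisfies all four listed conditions (necessarily $4 \leq k \leq m+1$, since otherwise position $k-3$ does not exist). The key observation to set up before anything else is a "who modifies whom" table: position $i$ is first touched at step $i$ and last touched at step $\min(i+2, m+1)$, because the window at step $\ell$ is $(\ell, \ell-1, \ell-2)$. Consequently $w_{m+1,k-3} = w_{k-1,k-3}$, $w_{m+1,k-2} = w_{k,k-2}$, $w_{m+1,k-1} = w_{k+1,k-1}$ (for $k \leq m$), and $w_{m+1,k} = w_{k+2,k}$ (for $k+2 \leq m+1$). I would also record the invariant $w_{\ell, i} \leq a_i$ — an easy induction using the constraint $w_{k-1,k} < a_k$ that guards the rule.

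Next I would peel off the history of position $k-2$. Since $w_{m+1,k-2} = w_{k,k-2} = a_{k-2}$ and the rule at step $k$ strictly decreases position $k-2$ when it fires, that rule cannot have fired; hence $w_{k-1,k-2} = a_{k-2}$. Now at step $k-1$, the rule sends position $k-2$ to $0$, so it too did not fire (else $w_{k-1,k-2} = 0 \neq a_{k-2}$, using $k-2 \geq 2$ so $a_{k-2} \geq 1$). Therefore $w_{k-2,k-2} = w_{k-1,k-2} = a_{k-2}$, $w_{k-2,k-3} = w_{k-1,k-3} > 0$, and since the rule nevertheless declined to fire at step $k-1$, its remaining precondition must fail, giving $w_{k-2,k-1} \geq a_{k-1}$ and thus (by the invariant) $w_{k-2,k-1} = a_{k-1}$. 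Consequently $w_{k-1,k-1} = a_{k-1}$. Applying the same "failed precondition" reasoning to step $k$ yields $w_{k-1,k} = a_k$, hence $w_{k,k} = a_k$ and $w_{k,k-1} = a_{k-1}$.

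Finally I would examine step $k+1$ (for $k \leq m$). Its preconditions $w_{k,k} = a_k$ and $w_{k,k-1} = a_{k-1} > 0$ already hold, so firing depends solely on whether $w_{k,k+1} < a_{k+1}$. If the rule fires, $w_{k+1,k} = 0$, and at step $k+2$ (which exists or is absent; either way $w_{m+1,k} = w_{k+1,k}$ in the absent case) the precondition $w_{k+1,k} > 0$ fails, giving $w_{m+1,k} = 0$ and contradicting $w_{m+1,k} = a_k \geq 1$. If the rule does not fire, then $w_{m+1,k-1} = w_{k+1,k-1} = w_{k,k-1} = a_{k-1}$, contradicting $w_{m+1,k-1} < a_{k-1}$. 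For the edge case $k = m+1$, the argument terminates one step earlier: Step 3 still gives $w_{m,m} = a_m$ and $w_{m,m-1} = a_{m-1} > 0$, while $w_{m,m+1} = 0 < a_{m+1}$, so the rule at step $m+1$ is forced to fire; but Step 1 established that it did not, a contradiction.

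I expect the main obstacle is purely bookkeeping — keeping straight, for each of positions $k-3, k-2, k-1, k$, the latest step at which it could still change, and coordinating this with the rule's preconditions. The logical core of the proof is short: each of the four input conditions forces one of the rule's preconditions to hold at a specific past step, and the only way to escape triggering that rule is to climb up one position at a time until either Case A (rule fires, wiping out position $k$) or Case B (rule doesn't fire, freezing $w_{\cdot,k-1}$ at $a_{k-1}$) yields the contradiction.
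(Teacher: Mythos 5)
Your proof is correct and follows essentially the same route as the paper's: trace positions $k-2$ and $k-1$ back through steps $k-1$ and $k$ using the invariant $w_{\ell,i}\leq a_i$ and the ``rule didn't fire, so a precondition failed'' reasoning, then derive the contradiction at step $k+1$ (your Case A/Case B split is exactly the paper's subargument showing $w_{k-1,k}<a_k$, read in the contrapositive direction). The only differences are cosmetic --- you establish up front that the rule at step $k$ did not fire rather than concluding at the end that it must have fired, and you explicitly treat the boundary case $k=m+1$, which the paper leaves implicit.
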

\begin{proof} Towards a contradiction, suppose that there is such an $k$. We will first show that $w_{k-2,k-3}>0, w_{k-2,k-2} = a_{k-2}$ and $w_{k-2,k-1} = a_{k-1}$.\\

 \noindent Suppose that $w_{k-2,k-3}=0$. Then the algorithm would not have made any changes at step $k-2$. Thus $w_{k-1,k-3}=0$. Because the entry will not be changed later than step $k-1$, $w_{m+1,k-3}=0$. However, this contradicts $w_{m+1,k-3}>0$. Thus $w_{k-2,k-3}>0$.\\

\noindent Suppose that $w_{k-2,k-2} < a_{k-2}$. Then $w_{k-1,k-2} = w_{k-2,k-2}$. This implies that $w_{k,k-2} < a_{k-2}$ and $w_{m+1,k-2} < a_k$. This a contradiction against our assumption $w_{m+1,k-2} = a_{k-2}$. Hence  $w_{k-2,k-2} = a_{k-2}$. \\

\noindent Now suppose that $w_{k-2,k-1} < a_{k-1}$. Since $w_{k-2,k-2} = a_{k-2}$ and $w_{k-2,k-3}>0$, $w_{k-1,k-2}=0$. Thus $w_{m+1,k-2} = 0$, contradicting $w_{m+1,k-2} = a_{k-2}$. So $w_{k-2,k-1} = a_{k-1}$.\\

\noindent It follows that $w_{k-1,k-1} = w_{k-2,k-1}=a_{k-1}$ and $w_{k-1,k-2} = w_{k-1,k-2}=a_{k-2}$. We will now argue that $w_{k-1,k}<a_k$.\\

 \noindent Suppose towards a contradiction that $w_{k-1,k} = a_k$. Then $w_{k,k} = a_k$ and $w_{k,k-1} = a_{k-1}$. Since $w_{m+1,k-1}<a_{k-1}$, we have $w_{k,k+1} < a_{k+1}$. Thus $w_{k+1,k} = 0$. Hence $w_{m+1,k} = 0$, a contradiction. So $w_{k-1,k} < a_k$.\\

\noindent  We conclude that the entry at position $k-2$ is changed at step $k$. Therefore, $w_{k,k-2} = w_{k-1,k-2} -1= a_{k-2}-1$. So $w_{m+1,k-2} = a_{k-2}-1$. This contradicts our original assumption $w_{m+1,k-2}=a_{k-2}$.
\end{proof}

\noindent The third and final step of our algorithm is a left-to-right pass over $w_{m+1}$. The moving window is again of length $3$ and we use the same rule as in step 2. Given the word $w_{m+1,m+1}\dots w_{m+1,1}$, we will recursively generate a word
\[
v_k:=v_{k,m+2}\dots v_{k,1}
\]
for each $k\in N$ with $k \in \N$ with $3\leq k \leq m+3$. Because the algorithm moves left to right, we will start by defining $w_{m+3}$ and then recursively define $w_k$ for $k\leq m+3$.

\begin{algo} Let $k=m+3$. Then set
\[
v_{m+3} := 0w_{m+1,m+1}\dots w_{m+1,1}.
\]

Let $k \in \N$ with $3\leq k \leq m+2$. We now define $v_k=v_{k,m+2}\dots v_{k,1}$:

\begin{itemize}
\item for $i\notin \{k,k-1,k-2\}$, we set $v_{k,i} := v_{k+1,i}$,
\item if $v_{k+1,k} < a_k$, $v_{k+1,k-1} = a_{k-1}$ and $v_{k+1,k-2}>0$, set
\[
v_{k,k}v_{k,k-1}v_{k,k-2} := (v_{k+1,k}+1)0(v_{k+1,k-2}-1),
\]
otherwise
\[
v_{k,k}v_{k,k-1}v_{k,k-2} := v_{k+1,k}v_{k+1,k-1}v_{k+1,k-2}.
\]
\end{itemize}
\end{algo}
\noindent As before Equation \eqref{equation:intro1} implies that this algorithm leaves the value represented unchanged:
\[
\sum_{k=0}^{m} w_{m+1,k+1}q_k = \sum_{k=0}^{m} v_{3,k+1}q_k.
\]
Moveover, we have $v_{k,i} \leq a_k$ for every $k=3,...,m+3$ and $i=1,\dots, m+2$. We will now show $v_3$ is indeed the Ostrowski representation of $M+N$. It is enough to prove the following Proposition.

\begin{prop}\label{prop:step3} Let $l \geq 3$. Then there is no $k\geq l-1$ such that $v_{l,k} = a_k$ and $v_{l,k-1} > 0$.
\end{prop}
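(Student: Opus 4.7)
The plan is to prove Proposition \ref{prop:step3} by downward induction on $l$, from $l=m+3$ down to $l=3$. Since Algorithm 3 and Algorithm 2 share the same carry rule, it is natural to maintain along the run a second, Lemma \ref{lem:step2}-style invariant: for every $k\geq l-1$, the quadruple
\[
v_{l,k}=a_k,\quad v_{l,k-1}<a_{k-1},\quad v_{l,k-2}=a_{k-2},\quad v_{l,k-3}>0
\]
does not occur. This auxiliary invariant is needed to resolve the single non-routine case in the proof of the main claim.

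For the base case $l=m+3$, the word $v_{m+3}=0\,w_{m+1,m+1}\cdots w_{m+1,1}$ has $v_{m+3,m+2}=0$, so the main claim holds vacuously since $a_{m+2}\geq 1$, and the auxiliary invariant is exactly the statement of Lemma \ref{lem:step2} under the identification $v_{m+3,i}=w_{m+1,i}$ for $i\leq m+1$. For the inductive step, fix $k\geq l-1$. Positions outside the window $\{l,l-1,l-2\}$ are unchanged at step $l$, so the claim for such $k$ reduces immediately to the inductive hypothesis. The remaining values of $k$ (within three of $l$) are analysed by splitting on whether the carry rule fires at step $l$: if it does not, everything reduces to the inductive hypothesis; if it does, we use that carry sets $v_{l,l-1}=0\neq a_{l-1}$ (blocking the forbidden pattern at $k=l-1$ and at $k=l$) and that it increases position $l$ by exactly one.

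The main obstacle is the case $k=l+1$ when the carry rule fires at step $l$. Here $v_{l,l}=v_{l+1,l}+1\geq 1$, and we would obtain a forbidden pattern if $v_{l,l+1}=v_{l+1,l+1}=a_{l+1}$. Assuming this, the main invariant at step $l+1$ applied at $k=l+1$ forces $v_{l+1,l}=0$. Combined with the carry preconditions $v_{l+1,l-1}=a_{l-1}$ and $v_{l+1,l-2}>0$, this produces at index $k=l+1$ precisely the forbidden quadruple of the auxiliary invariant in $v_{l+1}$, contradicting the inductive hypothesis. Verifying that the auxiliary invariant itself is preserved by Algorithm 3 is an analogous case analysis on whether the carry fires at step $l$ and on how the window $\{l,l-1,l-2\}$ intersects $\{k,k-1,k-2,k-3\}$, using the bounds $v_{l,i}\leq a_i$ already established. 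Each sub-case is a short deterministic check from the rule, but the bookkeeping across the several sub-cases is where the bulk of the work lies.
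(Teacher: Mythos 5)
Your proposal is correct and follows essentially the same route as the paper: the auxiliary invariant you carry along is exactly Lemma \ref{lem:step3} (whose base case is Lemma \ref{lem:step2}), and the paper likewise resolves the critical $k=l+1$ case by appealing to that invariant in $v_{l+1}$. The only cosmetic difference is that the paper establishes the auxiliary invariant as a separate standalone lemma with its own induction on $l$, rather than folding it into a simultaneous induction with the main claim.
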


\noindent Before we give the proof of Proposition \ref{prop:step3}, we need one more Lemma.

\begin{lem}\label{lem:step3} Let $l\in \{3,\dots,m+3\}$. Then there is no $k\in \N$ such that
\begin{itemize}
\item $v_{l,k} = a_k$
\item $v_{l,k-1} < a_{k-1}$,
\item $v_{l,k-2} = a_{k-2}$, and
\item $v_{l,k-3} > 0$.
\end{itemize}
\end{lem}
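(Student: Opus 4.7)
The plan is to prove this by downward induction on $l$, from $l = m+3$ down to $l = 3$. For the base case $l = m+3$, I use $v_{m+3} = 0\, w_{m+1,m+1} \cdots w_{m+1,1}$: any alleged bad pattern at a position $k \leq m+1$ sits entirely inside the copy of $w_{m+1}$ and is ruled out by Lemma \ref{lem:step2}, while a bad pattern at $k = m+2$ would demand $v_{m+3,m+2} = a_{m+2}$, contradicting $v_{m+3,m+2} = 0$ and $a_{m+2} \geq 1$.

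For the inductive step, I would assume $v_{l+1}$ contains no bad pattern and show the same for $v_l$. If step $l$ of Algorithm 3 leaves $v_{l+1}$ untouched, there is nothing to prove. Otherwise the trigger conditions $v_{l+1,l} < a_l$, $v_{l+1,l-1} = a_{l-1}$, $v_{l+1,l-2} > 0$ hold, and only positions $l, l-1, l-2$ change, with $v_{l,l} = v_{l+1,l}+1$, $v_{l,l-1} = 0$, and $v_{l,l-2} = v_{l+1,l-2}-1$. Suppose towards a contradiction that $v_l$ has a bad pattern at some position $k$. Since such a pattern occupies positions $k-3,\dots,k$ and entries outside $\{l-2, l-1, l\}$ are unchanged, either the bad pattern already lives in $v_{l+1}$ (contradicting induction), or $k \in \{l-2, l-1, l, l+1, l+2, l+3\}$.

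Next I would dispatch the six remaining cases. Five of them collapse immediately using $v_{l,l-1} = 0$ or $v_{l,l-2} \leq a_{l-2} - 1$: for $k \in \{l-2, l\}$ the pattern requires $v_{l,l-2} = a_{l-2}$; for $k \in \{l-1, l+1\}$ it requires $v_{l,l-1} = a_{l-1} \geq 1$; and for $k = l+2$ it requires $v_{l,l-1} > 0$. The delicate case, which I expect to be the main obstacle, is $k = l+3$: the pattern here asks for $v_{l,l+3} = a_{l+3}$, $v_{l,l+2} < a_{l+2}$, $v_{l,l+1} = a_{l+1}$, and $v_{l,l} > 0$. The first three entries agree with $v_{l+1}$, and $v_{l,l} > 0$ is automatically satisfied after the increment, so nothing is trivially ruled out. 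If $v_{l+1,l} > 0$, then the same four conditions already hold in $v_{l+1}$ at $k = l+3$, contradicting induction. If $v_{l+1,l} = 0$, I combine this with the trigger to observe that in $v_{l+1}$ one has $v_{l+1,l+1} = a_{l+1}$, $v_{l+1,l} = 0 < a_l$, $v_{l+1,l-1} = a_{l-1}$, $v_{l+1,l-2} > 0$, which is itself a bad pattern at the shifted position $k = l+1$, again contradicting induction. The heart of the argument is thus the realization that if the algorithm creates a new bad pattern at $k = l+3$, a different bad pattern at $k = l+1$ must already have existed in $v_{l+1}$.
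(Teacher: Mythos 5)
Your proposal is correct and follows essentially the same downward induction as the paper, including the key observation in the $k=l+3$ case that the trigger conditions together with $v_{l+1,l}=0$ produce a forbidden pattern at the shifted position $l+1$ in $v_{l+1}$. The only cosmetic difference is that you first assume a change occurred and then case on $k$, whereas the paper cases on $k$ and rules out a change in each case; the content is identical.
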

\begin{proof} We prove the Lemma by induction on $l$. By Lemma \ref{lem:step2}, there is no such $k$ for $m+3$. Suppose that the statement holds for $l+1$. We want to show the statement for $l$. Towards a contradiction, suppose that there is a $k$ such that
\begin{equation}\label{eq:1}
v_{l,k} = a_k, v_{l,k-1} < a_{k-1}, v_{l,k-2} = a_{k-2} \hbox{ and } v_{l,k-3}>0.
\end{equation}
By the induction hypothesis, it is enough to check that no change was made at step $l$; that is $v_{l,i} = v_{l+1,i}$ for $i\in \{k,...,k-3\}$. Since the algorithm only modifies the entries at position $l,l+1$ or $l+2$, we can assume that $k \in \{l-2,\dots,l+3\}$. We consider each case separately. \\

\noindent First, suppose $k=l-2$. We get that $v_{l,i}=v_{l+1,i}$ for $i\in \{k-1,k-2,k-3\}$, because they are not in the moving window at step $l$. The only possible change is at position $k$. Since $v_{l,l-2} < v_{l+1,l-2}$ by induction hypothesis, and $v_{l,l-2}=a_{l-2}$, we get $v_{l,k}=v_{l+1,k}$. So no change is made. \\

\noindent Suppose that $k=l-1$. If a change is made at step $l$, then $v_{l,k}=0$. But this contradicts \eqref{eq:1}. Hence no change is made in this case.\\

\noindent Suppose that $k=l$. If a change is made at step $l$, then $v_{l,k-2}= v_{l+1,k-2} - 1 < a_{k-2}$. As before, this contradicts \eqref{eq:1}. Thus no change is made. \\

\noindent Suppose $k=l+1$. If a change is made at step $l$, then $v_{l,k-2}=0$ contradicting \eqref{eq:1}. So no change is made in this case either.\\

 \noindent Suppose $k=l+2$. If a change is made at step $l$, then $v_{l,k-3}=0$. This again contradicts \eqref{eq:1}, and hence no change is made.\\

\noindent Finally suppose $k=l+3$. By induction hypothesis, $v_{l+1,k-3}=0$. Since $v_{l,k-3}>0$, we have $v_{l+1,k-4} =a_{k-4}$ and $v_{l+1,k-5}>0$. Then
\[
v_{l+1,k-2} = a_{k-2}, v_{l+1,k-3} = 0, v_{l+1,k-4} =a_{k-4} \hbox{ and } v_{l+1,k-5}>0.
\]
This contradicts the induction hypothesis.
\end{proof}

\begin{proof}[Proof of Propositon \ref{prop:step3}] We prove this statement by induction on $l$. For $l=m+3$ the statement holds trivially, because $v_{m+3,m+2}=0$. Now suppose that the statement holds for $l+1$, but fails for $l$. Hence there is $k \geq l-1$ such that $v_{l,k} = a_k$ and $v_{l,k-1} > 0$. Since $v_{l+1,i}=v_{l,i}$ for $i>l$, we have $k\leq l+1$. We now consider the three remaining cases $k=l+1$, $k=l$ and $k=l-1$ individually.\\

\noindent If $k=l+1$, then $v_{l+1,k}=a_{l+1,k}$. By the induction hypothesis, $v_{l+1,k-1}=0$. But in order for $v_{l,k-1}>0$ to hold, we must have $v_{l+1,k-2}=a_{k-2}$ and $v_{l+1,k-3}>0$. This contradicts Lemma \ref{lem:step3}.\\

\noindent If $k=l$, then either $v_{l+1,k}=a_k$ or $v_{l+1,k} = a_k-1$. Suppose that $v_{l+1,k} = a_{k}-1$. Then $v_{l+1,k-1} = a_k$ and $v_{l+1,k-2}>0$. This implies $v_{l,k-1}=0$, which contradicts $v_{l,k-1}>0$. Suppose that $v_{l+1,k} = a_k$. By induction hypothesis, $v_{l+1,k-1}=0$. But then no change is made at step $l$, and hence $v_{l,k-1}=0$. A contradiction against $v_{l,k-1}>0$.\\

\noindent If $k=l-1$,  then no change is made at step $l$, since $v_{l,l-1}=a_{l-1}$. Hence $v_{l+1,l-1}=v_{l,l-1}=a_{l-1}$ and $v_{l+1,l-2} = v_{l,l-2} > 0$. Since no change was made at step $l$, we get that $v_{l+1,l}=a_{l}$. This contradicts the induction hypothesis.
\end{proof}

\begin{cor}\label{cor:ostadd} The word $v_{3,m+2}\dots v_{3,1}$ is the Ostrowski representation of $M+N$.
\end{cor}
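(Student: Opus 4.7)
The plan is to verify that $v_{3,m+2}\dots v_{3,1}$ satisfies the three defining conditions of an Ostrowski representation and represents the value $M+N$; uniqueness of Ostrowski representations then forces it to be \emph{the} Ostrowski representation of $M+N$.

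First, I would collect the value-preservation statements already observed after each algorithm. Algorithm~1 gives $\sum z_{3,k+1}q_k = \sum s_{k+1}q_k = M+N$, Algorithm~2 gives $\sum w_{m+1,k+1}q_k = \sum z_{3,k+1}q_k$, and Algorithm~3 gives $\sum v_{3,k+1}q_k = \sum w_{m+1,k+1}q_k$. Chaining these equalities shows that $v_3$ represents $M+N$.

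Next, I would verify the digit bound $v_{3,k}\leq a_k$ for all $k$, which is exactly the bound recorded immediately after the description of Algorithm~3. For the boundary condition $v_{3,1}<a_1$, I would note that Proposition~\ref{prop:step1} gives $z_{3,1}\leq a_1-1$, and that neither Algorithm~2 nor Algorithm~3 can ever increase the digit in position~$1$: the only rule in Algorithm~2 that touches position $1$ (at step $k=3$) decreases it by~$1$, and similarly the only rule of Algorithm~3 that can affect position~$1$ (at step $k=3$) either leaves it unchanged or decreases it by~$1$. Hence $v_{3,1}\leq z_{3,1}\leq a_1-1$.

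Finally, the crucial non-adjacency condition ``if $v_{3,k}=a_k$ then $v_{3,k-1}=0$'' for $k\geq 2$ is precisely Proposition~\ref{prop:step3} applied with $l=3$. The three conditions together show that $v_{3,m+2}\dots v_{3,1}$ is a valid Ostrowski representation. Since each natural number has a unique Ostrowski representation and this word represents $M+N$, the proof is complete. There is no real obstacle here beyond checking that none of the three algorithms can sneak the first digit back up to~$a_1$; the combinatorial work has already been absorbed into Propositions~\ref{prop:step1} and~\ref{prop:step3}.
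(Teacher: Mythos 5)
Your proof is correct and follows essentially the same route as the paper, which leaves the corollary unproved as an immediate consequence of the value-preservation remarks, the digit bounds, and Proposition~\ref{prop:step3} applied with $l=3$. Your explicit check that the first digit stays below $a_1$ through Algorithms~2 and~3 is a small point the paper glosses over, but it is not a different argument.
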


\section{Proof of Theorem A}

In this section we will prove Theorem A. Let $a$ be a quadratic irrational number. Let $[a_0;a_1,\dots,a_n,\dots]$ be its continued fraction expansion. Since the continued fraction expansion of $a$ is periodic, it is of the form
\[
[a_0;a_1,\dots,a_{\xi-1},\overline{a_{\xi},\dots, a_{\nu}}],
\]
where $\nu-\xi$ is the length of the repeating block and the repeating block starts at $\xi$. We can choose $\xi$ and $\nu$ such that $\xi>4$ and $\nu - \xi\geq 3$.\footnote{It might be the case that neither $\xi$ nor $\nu$ are minimal, but this will be irrelevant here.} Set $\mu := \max_i a_i$. Set $m:=2\mu + 1$. Set $\Sigma_a:=\{0,\dots,m\}.$ \newline

\noindent We first remind the reader of the definitions of finite automata and recognizability. For more details, we refer the reader to \cite{automata}. Let $\Sigma$ be a finite set. We denote by $\Sigma^*$ the set of words of finite length on $\Sigma$.

\begin{defn} A \textbf{nondeterministic finite automaton} $\Cal A$ over $\Sigma$ is a quadruple $(S,I,T,F)$, where $S$ is a finite non-empty set, called the set of states of $\Cal A$, $I$ is a subset of $S$, called the set of initial states, $T\subseteq S \times \Sigma \times S$ is a non-empty set, called the transition table of $\Cal A$ and $F$ is a subset of $S$, called the set of final states of $\Cal A$. An automaton $\Cal A=(S,I,T,F)$ is \textbf{deterministic} if $I$ contains exactly one element, and for every $s\in S$ and $w \in \Sigma^*$ there is exactly one $s' \in S$ such that $(s,w,s')\in T$.
We say that an automaton $\Cal A$ on $\Sigma$ \textbf{accepts} a word $w=w_n\dots w_1 \in \Sigma^*$ if there is a sequence $s_n,\dots, s_1,s_0 \in S$ such that $s_n \in I$, $s_0 \in F$ and for $i=1,\dots,n$, $(s_i,w_i,s_{i-1})\in T$. A subset $L\subseteq \Sigma^*$ is \textbf{recognized} by $\Cal A$ if $L$ is the set of $\Sigma$-words that are accepted by $\Cal A$. We say that $L\subseteq \Sigma^*$ is \textbf{recognizable} if $L$ is recognized by some deterministic finite automaton.
\end{defn}

\noindent It is well known (see \cite[Theorem 2.3.3]{automata}) that a set is recognizable if it is recognized by some \emph{nondeterministic} finite automaton.\newline

\noindent Let $\Sigma$ be a set containing $0$. Let $z=(z_1,\dots,z_n) \in (\Sigma^*)^n$ and let $m$ be the maximal length of $z_1,\dots,z_n$. We add to each $z_i$ the necessary number of $0$'s to get a word $z_i'$ of length $m$. The \textbf{convolution}\footnote{Here we followed the presentation in \cite{Villemaire}. For a general definition of convolution see \cite{automata}.} of $z$ is defined as the word $z_1 * \dots * z_n \in (\Sigma^n)^*$ whose $i$-th letter is the element of $\Sigma^n$ consisting of the $i$-th letters of  $z_1', \dots, z_n'$.

\begin{defn} A subset $X\subset (\Sigma^*)^n$ is \textbf{$\Sigma$-recognizable} if the set
\[
\{z_1*\dots *z_n \ : \ (z_1,\dots,z_n) \in X\}
\]
is $\Sigma^n$-recognizable.
\end{defn}

\noindent We remind the reader that every natural number $N$ can be written as $N = \sum_{k=0}^{n} b_{k+1} q_{k}$, where $b_{k} \in \N$ such that $b_1<a_1$, $b_k \leq a_{k}$ and, if $b_k = a_{k}$, $b_{k-1} = 0$, and that we denoted the $\Sigma_a$-word $b_n\dots b_1$ by $\rho_a(N)$.

\begin{defn} Let $X\subseteq \N^n$. We say that $X$ is \textbf{$a$-recognizable} if the set
\[
\{ (0^{l_1}\rho_a(N_1),\dots,0^{l_n}\rho_a(N_n)) \ : \ (N_1,\dots,N_n) \in X, l_1,\dots,l_n \in \N\}
\]
is $\Sigma_a$-recognizable.
\end{defn}

\noindent In this section we will prove that a subset $X\subseteq \N^n$ is $a$-recognizable if and only if $X$ is definable in $(\N,+,V_a)$.

\subsection*{Recognizability implies definability} We will first show that whenever a set $X \subseteq \N^n$ is $a$-recognizable, then $X$ is definable in $(\N,+,V_a)$. The proof here is an adjusted version of the proofs in Villemaire \cite{Villemaire} and \cite{Bruyere}.\newline

\noindent First note that $<$ is definable in $(\N,+,V_a)$ and so is $V_a(\N) = \{ q_k \ : \ k \in \N\}$. For convenience, we write $I$ for $V_a(\N)$. We denote the successor function on $I$ by $s_I$.

\begin{defn}For $j\in \{1,\dots, m\}$, let $\epsilon_{j} \subseteq I \times \N$ be the set of $(x,y) \in I \times \N$ with
\begin{align*}
\exists z &\in \N \exists t\in \N (z<x \wedge z+jx< s_I(x) \wedge V_a(t) > x \wedge V_a(x+t) = x \wedge y = z+jx+t) \\
 &\vee \exists z \in \N (z<x \wedge y < s_I(x)\wedge y = z+jx).
\end{align*}
Let $\epsilon_{0} \subseteq I \times \N$ be the set of $(x,y) \in I \times \N$ with $\bigwedge_{j=1}^m \neg \epsilon_j(x,y)$.
\end{defn}

\noindent This definition is inspired by \cite[Lemma 2.3]{Villemaire}. Obviously, $\epsilon_j$ is definable in $(\N,+,V_a)$. Because of the greediness of the Ostrowski representation, $\epsilon_j(x,y)$ holds iff $x=q_k$ for some $k\in \N$ and the coefficient
of $q_k$ in the Ostrowski representation of $y$ is $j$. We directly get the following Lemma.

\begin{lem}\label{lem:rl1} Let $l,n \in \N$ and let $\sum_{k} b_{k+1} q_k$ be the Ostrowski representation of $n$. Then
$b_{l+1} = j$ iff $\epsilon_j(q_l,n)$.
\end{lem}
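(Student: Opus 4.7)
The plan is to exhibit the canonical Ostrowski decomposition $n = \tilde{z} + b_{l+1} q_l + \tilde{t}$, where $\tilde{z} := \sum_{k < l} b_{k+1} q_k$ and $\tilde{t} := \sum_{k > l} b_{k+1} q_k$, and to recognize $\epsilon_j(q_l, n)$ as precisely the first-order assertion that such a decomposition exists with digit $j$ at position $l$. The three ingredients $z < q_l$, $z + jq_l < s_I(q_l) = q_{l+1}$, and the pair $V_a(t) > q_l$, $V_a(q_l + t) = q_l$ respectively express that $z$ has an Ostrowski expansion supported below position $l$, that attaching digit $j$ at position $l$ yields a legal Ostrowski word up to position $l$, and that $t$ has support strictly above position $l$ in such a way that adjoining a $1$ at position $l$ triggers no upward carry. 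I will use throughout the standard bijection between Ostrowski-legal words of length $l$ and $\{0, 1, \dots, q_l - 1\}$.

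For the forward direction, assume $b_{l+1} = j \geq 1$ and witness $\epsilon_j(q_l, n)$ with $z := \tilde{z}$ and $t := \tilde{t}$. The inequalities $z < q_l$ and $z + jq_l < q_{l+1}$ are immediate from the bijection, applied to $\tilde{z}$ and to $\tilde{z} + b_{l+1} q_l = \sum_{k \leq l} b_{k+1} q_k$. If $\tilde{t} = 0$ the second disjunct applies. Otherwise $V_a(\tilde{t}) \geq q_{l+1} > q_l$ by construction, and $V_a(q_l + \tilde{t}) = q_l$ follows by checking that $q_l + \tilde{t}$ is itself an Ostrowski-legal word with a $1$ at position $l$; the only possible obstruction is $b_{l+2} = a_{l+2}$, but then the Ostrowski constraint on the expansion of $n$ would force $b_{l+1} = 0$, contradicting $j \geq 1$.

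For the reverse direction, suppose $\epsilon_j(q_l, n)$ holds with $j \geq 1$ and extract witnesses $z, t$, with $t = 0$ in the second-disjunct case. From $V_a(t) > q_l$ one reads off that the Ostrowski rep of $t$ is supported above position $l$; from $V_a(q_l + t) = q_l$ one reads off further that its digit at position $l+1$ is strictly below $a_{l+2}$, for otherwise the carry $q_l + a_{l+2} q_{l+1} = q_{l+2}$ would push $V_a(q_l + t)$ up to at least $q_{l+2}$. Next, $z + jq_l < q_{l+1}$ forces $j \leq a_{l+1}$ and, in the boundary case $j = a_{l+1}$, also forces $z < q_{l-1}$. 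Concatenating the Ostrowski digits of $z$, the digit $j$ at position $l$, and the Ostrowski digits of $t$ thus produces an Ostrowski-legal word of value $n$; uniqueness identifies it with $\sum_k b_{k+1} q_k$, so $b_{l+1} = j$. The case $j = 0$ reduces to the others: $\epsilon_0(q_l, n)$ holds iff no $\epsilon_j(q_l, n)$ with $j \geq 1$ does, and the bounds $b_{l+1} \leq a_{l+1} \leq \mu \leq m$ ensure that the digits $1, \dots, m$ exhaust the nonzero possibilities.

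The only point requiring genuine care is pinning down that the conjunction $V_a(t) > q_l \wedge V_a(q_l + t) = q_l$ is equivalent to ``$t$ is supported on positions $> l$ and its digit at position $l+1$ is at most $a_{l+2} - 1$'', which is precisely the compatibility condition needed to splice a $1$ at position $l$ onto $t$ without triggering the carry $q_l + a_{l+2} q_{l+1} = q_{l+2}$. Once this is isolated, the rest of the argument is routine bookkeeping with the standard Ostrowski-interval bijection, and it does not invoke the addition algorithm of the previous section.
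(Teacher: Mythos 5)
Your proof is correct and supplies exactly the argument the paper leaves implicit: the paper gives no proof of this lemma beyond the remark that it follows ``directly'' from the greediness of the Ostrowski representation, and your decomposition $n=\tilde z+b_{l+1}q_l+\tilde t$ together with the splice-and-uniqueness bookkeeping (including the key observation that $V_a(t)>q_l\wedge V_a(q_l+t)=q_l$ encodes ``$t$ is supported above $l$ with digit $<a_{l+2}$ at position $l+1$'') is the natural way to make that remark precise. The one wrinkle neither you nor the paper addresses is the degenerate case $a_1=1$ (e.g.\ the golden ratio), where $q_0=q_1$, the set $I$ does not separate positions $0$ and $1$, and $s_I(q_0)=q_2$, so the statement at $l=0$ must be read modulo identifying positions $0$ and $1$; this is an imprecision in the lemma as stated rather than a gap in your argument.
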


\begin{defn} Let $I_e$ be the set of all $y \in I$ with
\[
\exists z \in \N \ \epsilon_1(1,z) \wedge \epsilon_1(y,z) \wedge \forall x \in I \big(\epsilon_1(x,z) \leftrightarrow \neg \epsilon_1(s_I(x),z)\big),
\]
and let $I_o$ be the set of all $y \in I$ with
\[
\exists z \in \N \ (\neg \epsilon_1(1,z)) \wedge \epsilon_1(y,z) \wedge \forall x \in I \big(\epsilon_1(x,z) \leftrightarrow \neg \epsilon_1(s_I(x),z)\big).
\]
\end{defn}
\noindent Obviously both $I_e$ and $I_o$ are definable in $(\N,+,V_a)$, $I=I_e \cup I_o$, and since $q_0=1$,
\[
I_e = \{ q_k \ : \ k \hbox{ even } \} \hbox{ and } I_o = \{ q_k \ : \ k \hbox{ odd } \}.
\]
\begin{defn} Let $U_{e} \subseteq \N$ be the set of all $y \in \N$ with
\[
\forall z \in I_o \  \epsilon_0(z,y) \wedge \forall z \in I_e \ (\epsilon_0(z,y) \vee \epsilon_1(z,y)),
\]
and $U_{o} \subseteq \N$ be the set of all $y \in \N$ with
\[
\forall z \in I_e \ \epsilon_0(z,y) \wedge \forall z \in I_o \ (\epsilon_0(z,y) \vee \epsilon_1(z,y)).
\]
\end{defn}

\noindent Again it is easy to see that $U_e$ and $U_o$ are definable in $(\N,+,V_a)$. We get the following Lemma from Lemma \ref{lem:rl1}.

\begin{lem}\label{lem:rlost} Let $n \in \N$ and let $\sum_{k} b_{k+1} q_k$ be the Ostrowski representation of $n$. Then
\begin{itemize}
\item [(i)] $n \in U_e$ if and only if for all even $k$ $b_{k+1} \leq 1$, and for all odd $k$ $b_{k+1}=0$,
\item [(ii)] $n \in U_o$ if and only if for all odd $k$ $b_{k+1} \leq 1$, and for all even $k$ $b_{k+1}=0$.
\end{itemize}
\end{lem}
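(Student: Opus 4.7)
The plan is to unwind the definitions of $U_e$ and $U_o$ and apply Lemma \ref{lem:rl1} to translate each clause of the form $\epsilon_j(q_k,n)$ into a statement about the $k$-th coefficient in the Ostrowski representation of $n$. Just before the lemma it is recorded that $I_e=\{q_k : k \text{ even}\}$ and $I_o=\{q_k : k \text{ odd}\}$, so quantifying $z$ over $I_o$ (respectively $I_e$) amounts to quantifying over the values $q_k$ with $k$ odd (respectively even).

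For the forward direction of (i), suppose $n\in U_e$ and write $n=\sum_k b_{k+1}q_k$ for its Ostrowski representation. The first conjunct in the definition of $U_e$, namely $\forall z\in I_o\ \epsilon_0(z,n)$, asserts $\epsilon_0(q_k,n)$ for every odd $k$; by Lemma \ref{lem:rl1} this is equivalent to $b_{k+1}=0$ for every odd $k$. The second conjunct, $\forall z\in I_e\ (\epsilon_0(z,n)\vee\epsilon_1(z,n))$, asserts that for every even $k$ either $b_{k+1}=0$ or $b_{k+1}=1$, i.e.\ $b_{k+1}\leq 1$. The converse is immediate by the same translation: any $n$ whose Ostrowski representation satisfies the stated coefficient constraints satisfies both conjuncts defining $U_e$, and hence lies in $U_e$.

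Part (ii) follows by exactly the same argument with the roles of even and odd indices interchanged. There is no substantive obstacle: the whole proof is a literal unpacking of the first-order formulas defining $U_e$ and $U_o$, via the correspondence supplied by Lemma \ref{lem:rl1}. The only minor point to record is that the alphabet $\Sigma_a=\{0,\ldots,m\}$ with $m=2\mu+1$ is large enough that every Ostrowski coefficient $b_{k+1}\leq a_k\leq \mu$ lies in $\{0,1,\ldots,m\}$; consequently the relations $\epsilon_0,\epsilon_1,\ldots,\epsilon_m$ collectively account for every possible value of $b_{k+1}$, so Lemma \ref{lem:rl1} is applicable at every position $k$ that we need to consider.
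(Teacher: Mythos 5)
Your proposal is correct and matches the paper's approach: the paper gives no explicit proof, simply noting that the lemma follows from Lemma \ref{lem:rl1}, and your argument is exactly that deduction spelled out, including the worthwhile observation that $m=2\mu+1$ ensures $\epsilon_0,\dots,\epsilon_m$ exhaust all possible coefficient values.
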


\begin{defn} Let $\epsilon \subseteq I \times (U_e \times U_o)$ be the set of all $(x,(y_1,y_2))$ with
\[
(x \in I_e \rightarrow \epsilon_1(x,y_1)) \wedge (x \in I_o \rightarrow \epsilon_1(x,y_2)).
\]
\end{defn}

\begin{thm}\label{thm:recidef} Let $X \subseteq \N^n$ be $a$-recognizable. Then $X$ is definable in $(\N,+,V_a)$.
\end{thm}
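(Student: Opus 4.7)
The plan is to simulate, inside $(\N,+,V_a)$, the run of a finite automaton recognizing the language associated with $X$, following Villemaire \cite{Villemaire} and \cite{Bruyere}. First I would fix a nondeterministic finite automaton $\Cal A=(S,I_{\Cal A},T,F)$ over $\Sigma_a^n$ recognizing
\[
L_X := \{0^{l_1}\rho_a(N_1)*\cdots*0^{l_n}\rho_a(N_n) \ : \ (N_1,\dots,N_n)\in X,\ l_1,\dots,l_n\in\N\}.
\]
After padding $S$ with unreachable states I may assume $|S|=2^r$ for some $r\in\N$, fix a bijection $S\leftrightarrow\{0,1\}^r$, and write $[s]_i$ for the $i$-th bit of $s$. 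The transition table then becomes a finite subset of $\{0,1\}^r\times\Sigma_a^n\times\{0,1\}^r$.

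The two ingredients I would use from what has already been set up are: first, Lemma \ref{lem:rl1}, which gives me that for each $d\in\Sigma_a$ and each $N\in\N$ the predicate ``the coefficient of $q_k$ in $\rho_a(N)$ equals $d$'' is definable as $\epsilon_d(q_k,N)$; second, Lemma \ref{lem:rlost} together with the definition of $\epsilon$, which tells me that any pair $(y_1,y_2)\in U_e\times U_o$ codes a free $\{0,1\}$-sequence indexed by $k\in\N$ via ``$b_k=1$ iff $\epsilon(q_k,(y_1,y_2))$ holds''. Consequently an $r$-tuple of such pairs codes an arbitrary sequence $(s_k)_k$ of elements of $S$ indexed by $k\in\N$.

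Given inputs $N_1,\dots,N_n$, the formula defining $X$ existentially quantifies over $r$ pairs $(y^{(1)}_1,y^{(1)}_2),\dots,(y^{(r)}_1,y^{(r)}_2)$ in $U_e\times U_o$, together with an element $x\in I$, and asserts:
\begin{enumerate}
\item[(i)] $x>N_j$ for every $j$, so that $\Cal A$ reads only the zero letter at all positions at or above $x$;
\item[(ii)] the state coded at $x$ lies in $I_{\Cal A}$, and the state coded at $q_0=1$ lies in $F$;
\item[(iii)] for every $q_k\in I$ with $q_k<x$, writing $s$ for the state coded at $s_I(q_k)$, $s'$ for the state coded at $q_k$, and $d_j$ for the unique $\delta\in\Sigma_a$ with $\epsilon_\delta(q_k,N_j)$, the triple $(s,(d_1,\dots,d_n),s')$ lies in $T$.
\end{enumerate}
Since $T$ is finite, (iii) unfolds to a finite disjunction over $T$ of conjunctions of $\epsilon_j$'s and $\epsilon$'s; (i) and (ii) are Boolean combinations of $\epsilon$, $+$, $<$. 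All relations involved are already shown to be definable in $(\N,+,V_a)$, so the whole formula defines $X$.

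The main obstacle is purely bookkeeping: I need to ensure that at positions above the length of the actual input, where $\Cal A$ is reading the all-zero letter in every coordinate, the coded state sequence is consistent with an initial run. I would handle this by first replacing $\Cal A$ with an equivalent automaton whose set $I_{\Cal A}$ of initial states is closed under the all-zero transition, so that (i) and (ii) do not over-constrain the run for any choice of sufficiently large $x\in I$. Apart from this closure, the argument is a mechanical translation of the automaton into a first-order formula via the digit extractor $\epsilon_j$ and the state-encoder $\epsilon$.
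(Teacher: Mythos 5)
Your proposal is correct and follows essentially the same route as the paper: both existentially quantify over a run of the automaton coded by elements of $U_e\times U_o$ via the relation $\epsilon$, assert that the run starts in an initial state above all the inputs and ends in a final state at $q_0$, and express transition consistency as a finite disjunction over $T$ using the digit extractors $\epsilon_j$. The only cosmetic differences are that you use a binary encoding of states with $r=\lceil\log_2|S|\rceil$ pairs where the paper uses a one-hot encoding with $t=|S|$ elements of $U$ together with a uniqueness formula $\varphi$, and that your closure of the initial states under the all-zero letter plays the role of the paper's clause forcing every digit above the chosen $q\in I$ to vanish.
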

\begin{proof}  Let $X \subseteq \N^n$ be $a$-recognizable by a finite automaton $\Cal A = (S,I,T,F)$. Without loss generality we can assume that the set of states $S$ is $\{1,\dots,t\}$ for some $t\in \N$, and $I=\{1\}$.
Let $\varphi$ be the formula defining the following subset $Z$ of $U^t$:
\[
\{ (u_1,\dots,u_t) \in U^t \ : \forall q \in I \ \bigwedge_{i=1}^{t} \big(\epsilon(q,u_i) \rightarrow \bigwedge_{j=1, j\neq i}^t \neg \epsilon(q,u_j)\big)\}.
\]
So $Z$ is the set of tuples $(u_1,\dots,u_t) \in U^t$ such that for $q \in I$ there is at most one $i\in \{1,\dots,t\}$ such that $\epsilon(q,u_i)$. Note that $x \in X$ if there is a run $s_{1}\dots s_{m}$ of $\Cal A$ on the word given by the Ostrowski representation of the coordinates of $x$ such that $s_{1} = 1$ and $s_{m} \in F$. The idea now is to code such a run as an element of $Z$. To be precise, a tuple $(u_1,\dots,u_t) \in Z$ will code a run $s_{1}\dots s_{m}$ if for each $q_i \in I$, $s_i$ is the unique element $k$ of $\{1,\dots,t\}$ such that $\epsilon(q_i,u_{k})$. Thus $x=(x_1,\dots,x_n) \in X$ if and only if $x$ satisfies the following formula in $(\N,+,V_a)$:
\begin{align*}
\exists u_1,\dots,u_t \in U \ \exists q \in I \ \varphi(u_1,\dots,u_t) \wedge \epsilon(1,u_1) \wedge \bigvee_{l\in F} \epsilon(q,u_l)\\
\wedge \bigwedge_{(l,(\rho_1,\dots,\rho_n),k)\in T} \forall z \in I  \Big((z > q) \rightarrow \bigwedge_{i=1}^n \bigwedge_{j=1}^{m} \neg \epsilon_{j}(z,x_i)\Big)\\
\wedge  \Big[\big(z \leq q \wedge \epsilon(z,u_l) \wedge \bigwedge_{i=1}^n \epsilon_{\rho_i}(z,x_i)\big)\rightarrow \epsilon(s_I(z),u_k)\Big].
\end{align*}
\end{proof}

\subsection*{Definability implies recognizability} We will prove that if a subset $X\subseteq \N^n$ is definable in $(\N,+,V_a)$, then it is $a$-recognizable. By \cite{Hodgson} it is suffices to show that the set $\N$ and the relations $\{(x,y) \in \N^2 \ : \ x=y\}$, $\{(x,y,z) \in \N^3 \ : \ x+y = z\}$ and $\{(x,y) \in \N^2 \ : \ V_a(x)=y\}$ are all $a$-recognizable. It is well known that $\N$ is $a$-recognizable (see for example \cite[Theorem 8]{Shallit}), and using that knowledge it is easy to check that $\{(x,y) \in \N^2 \ : \ x=y\}$ and $\{(x,y) \in \N^2 \ : \ V_a(x)=y\}$ are $a$-recognizable. We are now going to show that $\{(x,y,z) \in \N^3 \ : \ x+y = z\}$ is $a$-recognizable.\newline

\noindent By the work in the previous section, we have an algorithm to compute addition in Ostrowski representation based on $a$.  This algorithm consists of four steps, and we will now show that each of the four steps can be recognized by a finite automaton. Given two words $z=z_n\dots z_1, z'=z_n'\dots z_1' \in \rho_a(\N)$, the first step is to compute the $\Sigma_a$-word $(z_n+z_n')\dots (z_1+z_1')$, which we will denote by $z+z'$. It is straightforward to verify that the set $\{ z * z' * (z+z') \ : \ z,z' \in \rho_a(\N) \}$ is recognizable by a finite automaton. For $z,z' \in \Sigma_a^*$, we will write $z \rightsquigarrow_i z'$ if Algorithm $i$ produces $z'$ on input $z$. In the following, we will prove that the set $\{ z * z' \ : \ z,z' \in \Sigma_a^*, z\rightsquigarrow_i z'\}$ is recognizable by a finite automaton  for $i=1,2,3$. From these results it is immediate that
\begin{align*}
\{ z * z' * z'' * u_0 * u_1 * u_2 \ &: \ z,z',z'' \in \rho_a(\N), u_0,u_1,u_2 \in \Sigma_a^*,\\
& \  u_0 = z+z', u_0\rightsquigarrow_1 u_1 \rightsquigarrow_2 u_2 \rightsquigarrow_3 z'' \}
\end{align*}
is recognizable by a finite automaton. Since recognizability is preserved under projections (see \cite[Theorem 2.3.9]{automata}), $\{(x,y,z) \in \N^3 \ : \ x+y = z\}$ is $a$-recognizable by Corollary \ref{cor:ostadd}. Thus every set $X \subseteq \N^n$ definable in $(\N,+,V_a)$ is $a$-recognizable.

\subsection*{An automaton for Algorithm 1} We will now construct a non-deterministic automaton $\Cal A_1$ that recognizes the set $\{ z * z' \ : \ z,z' \in \Sigma_a^*, z \rightsquigarrow_1 z'\}$. Before giving the definition of $\Cal A_1$, we need to introduce some notation.  Let $A \subseteq \N_{\leq m}^4\times \N_{\leq m}^4 \times \N_{\leq m}^4$ be the set of tuples $(u,v,w)$ with
\[
w = \left\{
   \begin{array}{ll}
                                           (v_1+1,v_2-(u_2+1),u_3-1,v_4+1), & \hbox{ if $v_1<u_1, v_2 > u_{2}$ and $v_3=0$,} \\
                                              (v_1+1,v_2-u_2,v_3-1,v_4, & \hbox{ if $v_{1}<u_1, u_2\leq v_2 \leq 2u_2$ and $v_3>0$,} \\
                                              (v_1,v_2,v_3,v_4), & \hbox{otherwise.}
                                            \end{array}
                                          \right.
\]
Let $B \subseteq \N_{\leq m}^3\times \N_{\leq m}^3 \times \N_{\leq m}^3$ be the set of tuples $(u,v,w)$ with
\[
w= \left\{
  \begin{array}{ll}
    (v_1+1,v_2-(u_2+1),u_3-1), & \hbox{$v_1 < u_1$, $v_2 > u_2$ and $v_3=0$;} \\
    (v_1+1,v_2-u_2,v_3-1), & \hbox{$v_1 < u_1$, $v_2 \geq u_2$ and $u_1 \geq v_1>0$,;} \\
    (v_1+1,v_2-u_2+1,v_1-u_1-1), & \hbox{$v_{1} < u_1$, $v_{2} \geq u_2$ and $v_1>u_1$;} \\
    (v_1,v_2+1,v_1-u_1), & \hbox{if $v_2 < u_2$ and $v_1\geq u_1$;} \\
    (v_1,v_2,v_3), & \hbox{otherwise.}
  \end{array}
\right.
\]
Note that $A$ corresponds to the rules (A1),(A2) and (A3) of Algorithm 1, while $B$ corresponds to the rules (B1)-(B5) of Algorithm 1. The values of the variable $u$ represent the relevant part of the continued fraction, the values of the variable $v$ are used to code the entries in the moving window before any changes are carried out, and the values of the variable $w$ correspond to the entries in the moving window after the changes are carried out. For $i \in \{4,\dots,\nu\}$ and $l\in \{0,1\}$,
\[
P(i,l) := \left\{
         \begin{array}{ll}
          (a_i,a_{i-1},a_{i-2},a_{\nu}), & \hbox{$i=\xi+2$ and $l=1$;} \\
          (a_i,a_{i-1},a_{\nu},a_{\nu-1}), & \hbox{$i=\xi+1$ and $l=1$;} \\
          (a_i,a_{\nu},a_{\nu-1},a_{\nu-2}), & \hbox{$i=\xi$ and $l=1$;} \\
          (a_i,a_{i-1},a_{i-2},a_{i-3}), & \hbox{otherwise.} \\
         \end{array}
       \right.
\]

\noindent  We first explain informally the construction of $\Cal A_1$. Suppose we take $z=z_l\dots z_1\in \Sigma_a^*$. Now perform Algorithm 1 on $z$, and let the word $z'=z_l'\dots z_1'$ be the output. In order to carry out the operations at step $k$ in Algorithm 1, we needed to know the values of $a_k,a_{k-1},a_{k-2},a_{k-3}$. Because of the periodicity of the continued fraction expansion of $a$, there is $i\leq \nu$ such $a_k=a_i$. Let $l$ be $1$ if $k> \nu$ and $0$ otherwise. Then $P(i,l) = (a_k,a_{k-1},a_{k-2},a_{k-3})$. Hence in order to reconstruct $(a_k,a_{k-1},a_{k-2},a_{k-3}),$ it is enough to save $i$ and whether or not $k\leq \nu$. Moreover, to perform the operations at step $k$ in Algorithm 1, we also used the values of the last three entries in the moving window after the changes in the previous step are carried out, but before the window moves to the right. Let us denote the triple consisting of these entries by $v=(v_1,v_2,v_3) \in \Sigma_a^3$. So before the operations at step $k$ are performed, the values in the moving window are $(v_1,v_2,v_3,z_{k-3})$. Note that at step $k$ in the algorithm, we are reading in $z_{k-3}$, and not $z_k$. However, the value of $z_k'$ is determined at the same step. Indeed, at step $k$ with $k\geq 4$, the entries in the moving window are changed as follows:
\[
(v_1,v_2,v_3,z_{k-3})\mapsto (z_k',v_1',v_2',v_3'),
\]
for a certain triple $(v_1',v_2',v_3') \in \Sigma_a^3$ with $A(P(i,l),v_1,v_2,v_3,z_{k-3},z_k',v_1',v_2',v_3')$. The values in the moving window for step $k-1$ will be $(v'_1,v_2',v_3',z_{k-4})$. Because the value of $z_k'$ is only determined at step $k$, and thus at the same time $z_{k-3}'$ is being read, we are required to store the value of $z_k'$ for three steps.  In order to save this information when moving from state to state, we introduce another triple $(w_1,w_2,w_3) \in \Sigma_a^3$. This triple will always contain the last three digits of $z'$. That means that before step $k$, $(w_1,w_2,w_3)=(z'_{k},z'_{k-1},z'_{k-2})$. We now define the set of states of $\Cal A_1$ as the set of quadruples $(i,l,v,w)$, where $i\leq \nu$, $l \in \{0,1\}$, $v,w\in \Sigma_a^3$. The idea is that in each state of the automaton the pair $(i,l)$ codes the relevant part of the continued fraction expansion, $v$ contains the entries of the moving window, and $w\in \Sigma_a^3$ the values of $z_k'$ that we needed to save. The automaton moves from one of these states to another according to the rules described in Algorithm 1.\newline

\noindent Here is the definition of the automaton $\Cal A_1 =(S_1,I_1,T_1,F_1)$.
\begin{itemize}
\item [1.] The set $S_1$ of states of $\Cal A_1$ is
\begin{align*}
\{ (i,1,v,w) \ : \ \xi \leq i & \leq \nu, v,w \in \Sigma_a^3\}\\
&\cup \{ (i,0,v,w) \ : \ 3 \leq i\leq \nu, v,w \in \Sigma_a^3\},
\end{align*}
\item[2.] the set $I_1$ of initial states is
\[
\{ (i,l,(0,0,0),(0,0,0)) \in S \ : \ i \geq 4 \},
\]
\item[3.] the transition table $T_1$ contains the tuples $(s,(x,y),t)\in S_1 \times \Sigma_a^2 \times S_1$ that satisfy $w'=(w_2,w_3,y)$ and
one of the following conditions:
\begin{itemize}
\item[a.] $i\neq \xi, (j,l')=(i-1,l), A(P(i,l),v,x,w_1,v'),$
\item[b.] $i= \xi, l=1, (j,l')=(\nu,l), A(P(i,l),v,x,w_1,v')$,
\item[c.] $i= \xi, l=0, (j,l')=(i-1,l), A(P(i,l),v,x,w_1,v')$
\item[d.] $i=4, j=3$, $A(P(4,l),v,x,w_1,v'), B(a_3,a_2,a_1,v',w_2,w_3,y)$,
\end{itemize}
where $s=(i,l,v,w)$, $w=(w_1,w_2,w_3)$ and $t=(j,k,v',w')$,
\item[4.] the set $F_1$ of final states is $\{ (i,l,w,y) \in S_1 \ : \ i=3\}$.
\end{itemize}

\noindent We leave it to the reader to check the details that $\Cal A$ indeed recognizes the set $\{ z * z' \ : \ z,z' \in \Sigma_a^*, z \rightsquigarrow_1 z'\}$. The automata we constructed is non-deterministic, but as mentioned above there is deterministic finite automaton that recognizes the same set.

\subsection*{Automata for Algorithm 2 and 3} We now describe the non-deterministic automata $\Cal A_2$ and $\Cal A_3$ recognizing the sets $\{ z * z' \ : \ z,z' \in \Sigma_a^*, z \rightsquigarrow_2 z'\}$ and $\{ z * z' \ : \ z,z' \in \Sigma_a^*, z \rightsquigarrow_3 z'\}$. Again, we have to fix some notation first. Let $C\subseteq  \N_{\leq m}^3 \times \N_{\leq m}^3 \times \N_{\leq m}^3$ be the set of triples $(u,v,w)\in C$ such that
\[
w =\left\{
  \begin{array}{ll}
    (v_1+1,0,v_3-1), & \hbox{if $v_1 < u_1$, $v_2 = u_2$ and $v_3>0$;} \\
    (v_1,v_2,v_3), & \hbox{otherwise.}
  \end{array}
\right.
\]
The relation $C$ represents the operation performed in both Algorithm 2 and 3. As for $A$ and $B$ above, the values of the variable $u$ correspond to the relevant part of the continued fraction, while the values of the variables $v$ and $w$ represent the entries in the moving window, before and after any changes are carried out. For $i \in \{3,\dots,\nu\}$ and $l\in \{0,1\}$,
\[
Q(i,l) := \left\{
         \begin{array}{ll}
          (a_i,a_{i-1},a_{\nu}), & \hbox{$i=\xi+1$ and $l=1$;} \\
          (a_i,a_{\nu},a_{\nu-1}), & \hbox{$i=\xi$ and $l=1$;} \\
          (a_i,a_{i-1},a_{i-2}), & \hbox{otherwise.} \\
         \end{array}
       \right.
\]

\noindent We start with an informal description of the automaton $\Cal A_2$. Let $z=z_l\dots z_1\in \Sigma_a^*$ and suppose that $z'=z_l'\dots z_1'$ is the output of Algorithm 2 on input $z$. To perform the operations at step $k$ in Algorithm 2, we again need to know a certain part of the continued fraction expansion of $a$; in this case $(a_k,a_{k-1},a_{k-2})$. As before it is enough to know the natural numbers $i\leq \nu$ with $a_k=a_i$, and whether $k <\nu$. Set $l$ to be $1$ if $k> \nu$ and $0$ otherwise.
 Then $Q(i,l) = (a_k,a_{k-1},a_{k-2})$. When constructing $\Cal A_2$, we have to be careful: the Algorithm 2 runs from the right to the left, but the automaton reads the input from the left to the right. Let $(v_1',v_2') \in \Sigma_a^2$ be such that $(z_k,v_1',v_2')$ are the entries in the moving window before the changes at step $k$ are made. Then at step $k$, the entries change as follows:
 \[
(z_k,v_1',v_2')\mapsto (v_1,v_2,z_{k-2}'),
\]
for some pair $(v_1,v_2) \in \Sigma_a^2$ with $C(Q(i,l),z_k,v_1',v_2',v_1,v_2,z_{k-2}')$. So when the automaton reads in $(z_{k-2},z_{k-2}')$, the value of $z_k$ is used to determine $z_{k-2}'$. Hence in contrast to $\Cal A_1$, the automaton $\Cal A_2$ has to remember the value of $z_k$, and not the value of $z_k'$. We define the states of $\Cal A_2$ to be tuples $(i,l,v,w)\in \{0,\dots,m\}\times \{0,1\} \times \Sigma_a^2 \times \Sigma_a^2$. The pair $v$ is again used to save the entries of the moving window, and $w$ is needed to remember the previously read entries of $z$. The automaton moves from one of these states to another according to the rules described in Algorithm 2. However, since the automaton reads the input backwards, the automaton will go from a state $(i,l,v,w)$ to a state $(i',l',v',w')$ if $Q(i,l)$ and $Q(i',l')$ are the correct parts of the continued fraction expansion of $a$ and the algorithm transforms $(z_k,v_1',v_2')$ to $(v_1,v_2,z_{k-2}')$.
 \newline


\noindent Here is the definition of the automaton $\Cal A_2 =(S_2,I_2,T_2,F_2)$.
\begin{itemize}
\item [1.] The set $S_2$ of states of $\Cal A_2$ is
\begin{align*}
\{ (i,1,v,w) \ : \ \xi \leq i & \leq \nu, v,w \in \Sigma_a^2\}\\
&\cup \{ (i,0,v,w) \ : \ 2 \leq i\leq \xi, v,w \in \Sigma_a^2\},
\end{align*}
\item[2.] the set $I_2$ of initial states is
\[
\{ (i,l,(0,0,0),(0,0,0)) \in S \ : \ i \geq 3 \},
\]
\item[3.] the transition table $T_2$ contains the tuples $(s,(x,y),t)\in S_2 \times \Sigma_a^2 \times S_2$ that satisfy $w'=(w_2,x)$ and
one of the following conditions:
\begin{itemize}
\item[a.] $i\neq \xi, (j,l')=(i-1,l), C(Q(i,l),w_1,v',v,y),$
\item[b.] $i= \xi, l=1, (j,l')=(\nu,l), C(Q(i,l),w_1,v',v,y)$,
\item[c.] $i= \xi, l=0, (j,l')=(i-1,l), C(Q(i,l),w_1,v',v,y)$
\item[d.] $i=3, j=2$, $C(Q(i,0),w,x,v,y)$,
\end{itemize}
where $s=(i,l,v,w)$, $w=(w_1,w_2)$ and $t=(j,k,v',w')$,
\item[4.] the set $F_2$ of final states is $\{ (i,l,w,y) \in S_2 \ : \ i=3\}$.
\end{itemize}

\noindent As in the case of Algorithm 1, we leave it to the reader to verify that $\Cal A_2$ recognizes the set $\{ z * z' \ : \ z,z' \in \Sigma_a^*, z \rightsquigarrow_2 z'\}$. As before, while $\Cal A_2$ is non-deterministic,  there is a deterministic automata recognizing the same set as $\Cal A_2$.\\

\noindent It is left to construct the automaton for Algorithm 3. The only difference between Algorithm 2 and 3 is the direction in which the algorithm runs over the input. Hence the only adjustment we need to make to $\Cal A_2$, is to address the change in direction. Let $\Cal A_3 =(S_2,I_2,T_3,F_2)$ be the automaton that has the same states as $\Cal A_2$, but whose transition table $T_3$ contains the tuples $(s,(x,y),t)\in S_2 \times \Sigma_a^2 \times S_2$ that satisfy $w'=(w_2,y)$ and
one of the following conditions:
\begin{itemize}
\item[a.] $i\neq \xi, (j,l')=(i-1,l), C(Q(i,l),v,x,w_1,v'),$
\item[b.] $i= \xi, l=1, (j,l')=(\nu,l), C(Q(i,l),v,x,w_1,v')$,
\item[c.] $i= \xi, l=0, (j,l')=(i-1,l), C(Q(i,l),v,x,w_1,v')$
\item[d.] $i=3, j=2$, $C(Q(i,0),v,x,w,y)$,
\end{itemize}
where $s=(i,l,v,w)$, $w=(w_1,w_2)$ and $t=(j,k,v',w')$.\\

\noindent The set $\{ z * z' \ : \ z,z' \in \Sigma_a^*, z \rightsquigarrow_3 z'\}$ is recognized by $\Cal A_3$. So there is also a deterministic automaton recognizes this set. This completes the proof of Theorem A.

\bibliographystyle{plain}
  \bibliography{hieronymi}

\end{document}